\documentclass[11pt,reqno]{article}
\usepackage[margin=1in,letterpaper]{geometry}
\usepackage{microtype}
\usepackage{amssymb}
\usepackage{amsmath} 
\usepackage{mathtools}
\usepackage{tikz-cd}
\usepackage{amsthm}
\usepackage{thmtools, thm-restate}
\usepackage{accents}
\usepackage{authblk} 
\usepackage{hyperref} 
\hypersetup{
    colorlinks=true,
    linkcolor=blue,  
    urlcolor=blue,
    citecolor=blue
}
\usepackage{enumitem} 
\usepackage{graphicx} 
\usepackage{cleveref}
\let\cref\Cref

\usepackage[
    backend=biber,
    style=alphabetic,
    maxnames = 99,
    url=false,      
    doi=true,      
    isbn=false,    
    eprint=false    
]{biblatex}

\newtheorem{theorem}{Theorem}[section]
\newtheorem{lemma}[theorem]{Lemma}

\newtheorem{proposition}[theorem]{Proposition}
\newtheorem{corollary}[theorem]{Corollary}
\theoremstyle{definition}
\newtheorem{definition}[theorem]{Definition}
\newtheorem{example}[theorem]{Example}

\newtheorem{remark}[theorem]{Remark}

\definecolor{darkblue}{rgb}{0.0, 0.0, 0.8}
\definecolor{darkred}{rgb}{0.8, 0.0, 0.0}
\definecolor{darkgreen}{rgb}{0.0, 0.8, 0.0}


\newcommand{\rmH}{\mathrm{H}}

\newcommand{\rmQ}{\mathrm{Q}}

\newcommand{\rmS}{\mathrm{S}}


\newcommand{\bbQ}{\mathbb{Q}}
\newcommand{\bbR}{\mathbb{R}}
\newcommand{\bbS}{\mathbb{S}}

\newcommand{\bbZ}{\mathbb{Z}}


\newcommand{\SSet}{\mathbf{S S e t}}
\newcommand{\topo}{\mathbf{T  o p}}
\newcommand{\Ho}{\mathbf{H o}}
\newcommand{\Vect}{\mathbf{V e c}}


\newcommand{\dgh}{d_\mathrm{GH}}
\newcommand{\dhi}{d_\mathrm{HI}}
\newcommand{\di}{d_\mathrm{I}}
\newcommand{\dhaus}{d_\mathrm{H}}



\newcommand{\VR}{\operatorname{VR}}
\newcommand{\im}{\operatorname{im}}
\newcommand{\id}{\operatorname{id}}
\newcommand{\Hom}{\operatorname{Hom}}

\newcommand{\pcHVR}[3][\epsilon]{\rmH^{#2} \left(\VR_{#1} \left(#3\right)\right)}
\newcommand{\op}{\operatorname{op}}

\usepackage[margin=1in]{geometry}
\usepackage{setspace}
\onehalfspacing
\usepackage{amsmath, amssymb, amsthm, mathtools}
\usepackage{enumitem}
\setlist[itemize]{topsep=3pt, itemsep=3pt, leftmargin=*}
\usepackage{newtxtext,newtxmath}
\usepackage{tikz-cd}

\AtBeginDocument{%
  \abovedisplayskip=6pt plus 3pt minus 3pt
  \belowdisplayskip=6pt plus 3pt minus 3pt
}

\DeclareBibliographyDriver{article}{%
  \usebibmacro{bibindex}%
  \usebibmacro{begentry}%
  \printnames{author}%
  \setunit{\labelnamepunct}\newblock
  \printfield{title}%
  \newunit\newblock
  \printfield{journaltitle}%
  \setunit*{\addspace}%
  \printfield{volume}%
  \setunit{\addcolon\addspace}%
  \printfield{number}%
  \setunit{\addcomma\addspace}%
  \printfield{year}%
  \setunit{\addcomma\addspace}%
  \printfield{pages}%
  \newunit\newblock
  \iffieldundef{doi}{
    \printfield{url}
  }{
    \printfield{doi}
  }
  \newunit\newblock
  \usebibmacro{finentry}
}

\bibliography{usualrefs}
\bibliography{refs} 


\newcommand{\APL}{\operatorname{A_{PL}}}
\newcommand{\diam}{\operatorname{diam}}
\newcommand{\catC}{\mathcal{C}}
\newcommand{\catD}{\mathcal{D}}

\newcommand{\CDGA}{\mathbf{C D G A}}

\newcommand{\vs}{\Vect}
\newcommand{\dgvs}{\mathbf{D G V e c}}
\newcommand{\GVect}{\mathbf{G V e c}}

\newcommand{\smm}[1]{\mathfrak{M}(#1)}
\newcommand{\pHVR}[3][ t ]{\rmH_{#2}(\VR_{#1} (#3))}

\newcommand{\Afunc}{A_\bullet}
\newcommand{\Bfunc}{B_\bullet}

\newcommand{\Xfunc}{X_\bullet}
\newcommand{\Yfunc}{Y_\bullet}

\newcommand{\Gfunc}{G_\bullet}

\newcommand{\Ufunc}{U_\bullet}
\newcommand{\Vfunc}{V_\bullet}
\newcommand{\Wfunc}{W_\bullet}

\usepackage{graphicx} 

\title{Persistent Sullivan Minimal Models of Metric Spaces}
\author{Ling~Zhou}
\affil{Department of Mathematics, 
	   Duke University.
		\thanks{\texttt{ling.zhou@duke.edu}}}
\date{\today}

\begin{document}
\maketitle

\begin{abstract} 
    We extend classical tools from rational homotopy theory to topological data analysis by introducing persistent Sullivan minimal models of persistent topological spaces. Our main result establishes that the interleaving distance between such models in the homotopy category of CDGAs is stable with respect to the homotopy interleaving distance of the underlying spaces. For Vietoris–Rips filtrations of metric spaces, this yields new persistent invariants that are more discriminative than persistent homology. We further show that these models provide sharper lower bounds for the Gromov–Hausdorff distance than those obtained from persistent homology or persistent rational homotopy groups. 
\end{abstract}

\section{Introduction}

In topological data analysis, \emph{persistent homology} is widely used to study the evolution of homological features across a filtration of spaces \cite{frosini1990distance,frosini1992measuring,robins1999towards,zomorodian2005computing,cohen2007stability,edelsbrunner2008persistent,carlsson2009topology,carlsson2020persistent}. 
Its foundational stability theorem states that the interleaving distance $\di$ between the persistent homology of two metric spaces is bounded above by a constant multiple of the Gromov–Hausdorff distance $\dgh$ between the underlying metric spaces:

\begin{restatable}[{Stability theorem for persistent homology, {\cite{chazal2009gromov,CSO14}}}]{theorem}{dbstab}\label{thm:dgh-classical} 
Let $X$ and $Y$ be two totally bounded metric spaces, and let \(\VR_\bullet(X)\) and \(\VR_\bullet(Y)\) denote their respective Vietoris-Rips filtrations. 
Then, for any degree $k{\geq 0},$
\[\di (\pHVR[\bullet]{k}{X},\pHVR[\bullet]{k}{Y})\leq 2\cdot \dgh(X,Y).\]
\end{restatable}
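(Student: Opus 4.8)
The plan is to convert the geometric hypothesis into a combinatorial one via correspondences, and then to build an interleaving out of arbitrary (not necessarily continuous) point maps. The starting point is the standard reformulation of the Gromov--Hausdorff distance: for any two metric spaces,
\[
  2\,\dgh(X,Y)\;=\;\inf_{R}\ \operatorname{dis}(R),
\]
the infimum taken over all correspondences $R\subseteq X\times Y$ (relations whose two coordinate projections are surjective), where $\operatorname{dis}(R)=\sup\{\,|d_X(x,x')-d_Y(y,y')|:(x,y),(x',y')\in R\,\}$. In view of this, it suffices to prove that every correspondence $R$ with $\operatorname{dis}(R)=\delta$ induces a $\delta$-interleaving between $\pHVR[\bullet]{k}{X}$ and $\pHVR[\bullet]{k}{Y}$; taking the infimum over $R$ then yields $\di(\pHVR[\bullet]{k}{X},\pHVR[\bullet]{k}{Y})\le 2\,\dgh(X,Y)$. (Total boundedness enters only to guarantee that these modules are q-tame, so that $\di$ is a genuine extended metric; the inequality itself does not use it, and a finite-distortion correspondence always exists since the spaces are bounded.)

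Next I would construct the interleaving maps. Fix $R$ with $\operatorname{dis}(R)=\delta$ and choose \emph{any} maps $\phi\colon X\to Y$ and $\psi\colon Y\to X$ \emph{subordinate to} $R$, meaning $(x,\phi(x))\in R$ and $(\psi(y),y)\in R$ for all $x,y$; no continuity is assumed. The key observation is that $\phi$, though possibly discontinuous, is simplicial up to a shift by $\delta$: if $\sigma=\{x_0,\dots,x_m\}$ is a simplex of $\VR_t(X)$, then for all $i,j$ we have $d_Y(\phi(x_i),\phi(x_j))\le d_X(x_i,x_j)+\delta\le t+\delta$, so $\phi$ carries $\sigma$ to a (possibly lower-dimensional) simplex of $\VR_{t+\delta}(Y)$. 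Since this vertex map is independent of $t$ and commutes with the Vietoris--Rips inclusions, it induces a persistence morphism $\phi_*\colon \pHVR[\bullet]{k}{X}\to \pHVR[\bullet]{k}{Y}$ of degree $\delta$; symmetrically, $\psi$ induces $\psi_*$ of degree $\delta$.

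Then I would verify the two interleaving identities using contiguity. For any simplex $\sigma=\{x_0,\dots,x_m\}$ of $\VR_t(X)$, the set $\sigma\cup(\psi\circ\phi)(\sigma)$ has diameter at most $t+2\delta$ in $X$: the bound $d_X(x_i,x_j)\le t$ is immediate, and for the mixed pairs and the doubly-pushed pairs one combines $(x_i,\phi(x_i)),(\psi\phi(x_j),\phi(x_j))\in R$ (respectively $(\psi\phi(x_i),\phi(x_i)),(\psi\phi(x_j),\phi(x_j))\in R$) with the displayed estimate to bound the relevant distance by $d_Y(\phi(x_i),\phi(x_j))+\delta\le t+2\delta$. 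Hence $\psi\circ\phi\colon\VR_t(X)\to\VR_{t+2\delta}(X)$ is contiguous to the canonical inclusion, so it induces on $\rmH_k$ precisely the transition morphism of $\pHVR[\bullet]{k}{X}$ from $t$ to $t+2\delta$; the same computation on the $Y$-side handles $\phi\circ\psi$. Therefore $(\phi_*,\psi_*)$ is a $\delta$-interleaving, and the theorem follows by taking the infimum over $R$.

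I expect the main obstacle to be not any single computation but keeping the shifts aligned: one must check that $\psi\circ\phi$ is contiguous \emph{to the canonical inclusion} (not merely to some simplicial map) and with the correct parameter $2\delta$, which is exactly what pins the constant to $2\,\dgh(X,Y)$; the Gromov--Hausdorff reformulation must also be invoked with care, since its infimum need not be attained. The conceptual crux, and what makes the argument robust, is that Vietoris--Rips functoriality tolerates arbitrary set maps subordinate to a correspondence, so no continuity or measurability of $\phi$ and $\psi$ is ever required.
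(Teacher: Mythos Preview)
The paper does not supply its own proof of this theorem; it is quoted as background from \cite{chazal2009gromov,CSO14} and later sharpened via \cref{thm:stability-HI}. Your argument is exactly the classical one from those references: reformulate $\dgh$ via the distortion of correspondences, extract subordinate set maps $\phi,\psi$, observe they are simplicial up to a $\delta$-shift on Vietoris--Rips complexes, and use contiguity to identify $\psi_*\phi_*$ and $\phi_*\psi_*$ with the structure maps. The only cosmetic point to tidy is the handling of the infimum: rather than ``fix $R$ with $\operatorname{dis}(R)=\delta$'' (which need not be attained), phrase it as ``for any $\delta>2\,\dgh(X,Y)$ choose $R$ with $\operatorname{dis}(R)<\delta$''; you already flag this at the end, so the logic is sound.
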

 
This result provides a robust theoretical justification for using persistent homology in data analysis.
However, persistent homology alone has limited discriminative power, as homology groups do not fully determine the topological structure of a space. 
This limitation motivates the search for more informative persistent invariants.

In this work, we investigate persistent invariants derived from Sullivan minimal models, extending classical tools from rational homotopy theory to the persistence setting. 
Our focus is on defining persistent Sullivan minimal models for persistent topological spaces and establishing stability results in the homotopy category of commutative differential graded algebras (CDGAs).

The theory of minimal models was initiated by Quillen in 1969. In \cite{Quillen1969}, he established an equivalence between the homotopy category of simply-connected rational spaces of finite type and the homotopy category of differential graded Lie algebras of finite type.
Inspired by the differential forms on a manifold, Sullivan constructs a morphism from the homotopy category of topological spaces to the homotopy category of (rational) CDGAs \cite{sullivan1977infinitesimal}. 
When restricted to the homotopy category of simply-connected rational spaces with finite Betti numbers and the corresponding category of CDGAs, this morphism becomes an equivalence.
Thus, Sullivan minimal models offer an algebraic characterization of the rational homotopy type of simply-connected spaces.

\paragraph*{Related work.} The idea of studying rational homotopy invariants in persistent settings has been considered in previous works. Peterson and Pancia \cite{petersonwebsite} developed a Mathematica package to compute Sullivan minimal models of Vietoris–Rips filtrations, aiming to facilitate the study of persistent rational homotopy groups. Independently, Hess, Lavenir, and Maggs \cite{hess2023cell} developed a model-categorical framework for tame persistent CDGAs and constructed explicit minimal models using interval-based Hirsch extensions. While their work emphasizes algebraic constructions and rational Postnikov towers, our approach introduces persistent minimal models from a geometric and topological perspective and is the first to establish metric stability results for such models.

Other developments in persistent homotopy theory include the work of Mémoli and the author \cite{memoli2024persistent}, which studies properties of persistent homotopy groups in both the rational and non-rational settings. 
Related work on non-rational persistent homotopy theory also appears in \cite{blumberg2017universality, lanari2023rectification, Jardine2019DataAH, jardine2020persistent}.

\paragraph*{Contributions.} 

We extend Sullivan’s framework to the persistent setting by introducing the notion of a \emph{persistent Sullivan minimal model}. A \emph{persistent space} is a functor \(\Xfunc\)  from the poset category \( (\mathbb{R}, \leq) \) to the category \(\topo\) of path-connected topological spaces of \emph{finite type}—that is, spaces whose homotopy groups are finite-dimensional \(\mathbb{Q}\)-vector spaces in each degree and vanish above some degree.  
A persistent Sullivan minimal model of $\Xfunc$ consists of a persistent minimal Sullivan algebra \(\smm{X_\bullet}\), together with a family of quasi-isomorphisms \(\mu_{X_\bullet}\), each relating \(\smm{X_t}\) to the CDGA \(\APL(X_t)\) of polynomial differential forms on \(X_t\).  
These maps are compatible with the structure maps of the persistence module, up to homotopy; see \cref{def:psmm}. 

Throughout, all vector spaces are assumed to be defined over \(\mathbb{Q}\) throughout.
Let \(\CDGA\) and \(\GVect\) denote the categories of CDGAs and graded vector spaces, respectively.

We establish several \emph{stability} results concerning the interleaving distance $\di$ between persistent Sullivan minimal models in the homotopy category of CDGAs, denoted \(\Ho(\CDGA)\). 
These results demonstrate that the interleaving distance in \(\Ho(\CDGA)\) is controlled by the homotopy interleaving distance $\dhi$ between the underlying persistent spaces, which is a homotopy-invariant refinement of the ordinary interleaving distance introduced by Blumberg and Lesnick~\cite{blumberg2017universality}.
See \cref{sub:stability} for further details, where we employ techniques similar to those in \cite{memoli2023persistent,memoli2024persistent} that studied the stability results for persistent homotopy groups (including persistent rational homotopy groups) and persistent cohomology rings, respectively.
See also \cite{zhou2023} for more details.

\begin{restatable}
{theorem}{stabofPM}\label{thm:stab-PM} 
Let $\Xfunc$ and $\Yfunc$ be two persistent spaces. Assume that $\smm{\Xfunc}$ and $\smm{\Yfunc}$ are persistent Sullivan minimal models of $\Xfunc$ and $\Yfunc$, respectively. Then 
\[\di^{\Ho(\CDGA)}(\smm{\Xfunc}, \smm{\Yfunc}) \leq \dhi(\Xfunc,\Yfunc).\]
For totally bounded metric spaces \(X\) and \(Y\), we have
\[\di^{\Ho(\CDGA)}( \smm{\VR_\bullet(X)},\smm{\VR_\bullet(Y)} )\leq 2\cdot\dgh(X,Y).\]
\end{restatable}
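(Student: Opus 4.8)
The plan is to decompose the argument into an abstract interleaving-transfer lemma and a concrete application to Vietoris--Rips filtrations. First I would set up the functoriality of the Sullivan construction: the assignment $X \mapsto \APL(X)$ is a contravariant functor to $\CDGA$, and passing to minimal models and then to $\Ho(\CDGA)$ is functorial as well (this is where the finite-type hypothesis on the persistent space is used, to guarantee existence and uniqueness up to isomorphism of the minimal model). The key observation is that both the minimal-model functor and $\APL$ send weak homotopy equivalences to isomorphisms in $\Ho(\CDGA)$ — indeed, $\APL$ already sends them to quasi-isomorphisms — so the induced persistent object $\smm{\Xfunc}$ in $\Ho(\CDGA)$ depends only on the \emph{weak homotopy type} of $\Xfunc$ levelwise and on the maps up to homotopy.

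The core step is then to show: if $\Xfunc$ and $\Yfunc$ are $\epsilon$-interleaved in the homotopy category of spaces (i.e.\ there are maps realizing an interleaving after passing to $\Ho(\topo)$), then $\smm{\Xfunc}$ and $\smm{\Yfunc}$ are $\epsilon$-interleaved in $\Ho(\CDGA)$. This is essentially formal: apply the (contravariant, hence variance-reversing but interleaving-distance-preserving) functor $\smm{-}$ to the interleaving squares. One must check that the triangles defining an interleaving, which commute only up to homotopy of spaces, map to triangles that commute in $\Ho(\CDGA)$ — but this is exactly what it means for $\smm{-}$ to factor through $\Ho(\topo)$. Combining this with the definition of $\dhi$ as the infimal $\epsilon$ over homotopy-interleavings (Blumberg--Lesnick), we conclude
\[
\di^{\Ho(\CDGA)}(\smm{\Xfunc}, \smm{\Yfunc}) \leq \dhi(\Xfunc,\Yfunc),
\]
which is the first inequality; this is really the statement preceding it in \cref{thm:stab-PM}, so here I would simply invoke it.

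For the second inequality, I would combine the first with the known comparison between $\dhi$ of Vietoris--Rips filtrations and the Gromov--Hausdorff distance. Specifically, it is a standard fact (used already in the persistent-homology stability theorem \cref{thm:dgh-classical}, and made homotopy-theoretic in \cite{blumberg2017universality}) that $\dhi(\VR_\bullet(X), \VR_\bullet(Y)) \le 2\,\dgh(X,Y)$: an $\epsilon$-correspondence between $X$ and $Y$ induces simplicial maps between $\VR_{\bullet}$ and $\VR_{\bullet+2\epsilon}$ that form a homotopy interleaving, because the two composites are contiguous to (shifts of) the identity. Chaining $\di^{\Ho(\CDGA)}(\smm{\VR_\bullet(X)},\smm{\VR_\bullet(Y)}) \le \dhi(\VR_\bullet(X),\VR_\bullet(Y)) \le 2\,\dgh(X,Y)$ gives the claim.

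The main obstacle I anticipate is not any single inequality but the bookkeeping needed to make $\smm{\Xfunc}$ a genuine object of the functor category $\mathrm{Fun}((\mathbb{R},\le), \Ho(\CDGA))$ — i.e.\ checking that the homotopy-coherence of the quasi-isomorphisms $\mu_{\Xfunc}$ in \cref{def:psmm} is exactly enough to define structure maps satisfying the (strict) functoriality required to even speak of interleaving distance in $\Ho(\CDGA)$. One subtlety is that $\VR_\bullet(X)$ has levelwise spaces that are simplicial complexes, which are path-connected of finite type only under mild hypotheses on $X$ (total boundedness handles this for the relevant range of $\epsilon$, up to restricting to connected components / large scales); I would address this by the same truncation and connectivity arguments as in \cite{memoli2024persistent}. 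Once the functoriality in $\Ho(\CDGA)$ is in place, the transfer of interleavings is soft, and the metric consequence is immediate from the Vietoris--Rips comparison.
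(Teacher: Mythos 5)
Your proposal is correct and follows essentially the same route as the paper: transfer interleavings through the (contravariant, homotopy-invariant) minimal-model functor, use invariance of $\smm{\cdot}$ under objectwise weak equivalence to handle the replacement spaces in the definition of $\dhi$, and then invoke the Blumberg--Lesnick bound $\dhi(\VR_\bullet(X),\VR_\bullet(Y))\le 2\,\dgh(X,Y)$ for the metric statement. The paper organizes the first two steps as \cref{cor:di-stab} plus \cref{lem:X simeq Y} combined via the triangle inequality, which is just a repackaging of your observation that $\smm{\cdot}$ factors through the levelwise homotopy category.
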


Furthermore, we show that persistent Sullivan minimal models yield \emph{sharper lower bounds} for the Gromov--Hausdorff distance between metric spaces than either persistent homology or rational homotopy groups. In particular, the interleaving distance between persistent Sullivan minimal models is bounded below by that of both persistent cohomology and persistent rational homotopy groups.
 
We denote by \(\rmH: \CDGA \to \GVect\) the homology functor. 
To indicate the grading, we use \(\rmH^*\) for cochain algebras and \(\rmH_*\) for chain algebras when necessary. 
However, when there is no risk of confusion, we may omit these superscripts and subscripts.

\begin{restatable}{theorem}{lowerBound}\label{thm:lower-bound} \begin{enumerate}
    \item \label{thm:interleaving-H}
    Let $\Afunc$ and $\Bfunc$ be two persistent CDGAs. Then,
    \begin{equation}\label{eq:stab 1}
        \di^{\GVect}(\rmH\circ \Afunc, \rmH\circ \Bfunc)\leq  \di^{\Ho(\CDGA)}(\Afunc,\Bfunc).
    \end{equation}
\item \label{thm:interleaving-V,W} Let $\wedge\Vfunc$ and $\wedge \Wfunc$ be two simply-connected persistent minimal Sullivan algebras. Then
    \begin{equation}\label{eq:stab 2}
        \di^{\GVect}(\Vfunc,\Wfunc)\leq  \di^{\Ho(\CDGA)}(\wedge\Vfunc, \wedge \Wfunc).
    \end{equation}
\end{enumerate}
\end{restatable}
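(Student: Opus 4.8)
For part (\ref{thm:interleaving-H}), the plan is to observe that homology is a functor $\rmH \colon \Ho(\CDGA) \to \GVect$: quasi-isomorphic CDGAs have the same homology, and homotopic CDGA morphisms induce the same map on homology, so $\rmH$ descends to the homotopy category. Any functor is automatically ``$0$-Lipschitz'' with respect to interleaving distances in the following sense: if $\Afunc$ and $\Bfunc$ are $\delta$-interleaved in $\Ho(\CDGA)$ via morphisms $\varphi_t \colon A_t \to B_{t+\delta}$ and $\psi_t \colon B_t \to A_{t+\delta}$ satisfying the interleaving triangles up to homotopy, then applying $\rmH$ yields morphisms $\rmH(\varphi_t)$ and $\rmH(\psi_t)$ in $\GVect$ that satisfy the corresponding triangles strictly (since $\rmH$ is a functor and respects composition and identities), hence exhibit a $\delta$-interleaving of $\rmH \circ \Afunc$ and $\rmH \circ \Bfunc$. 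Taking the infimum over $\delta$ gives \eqref{eq:stab 1}. The only point requiring a sentence of care is that the structure maps of $\rmH \circ \Afunc$ are $\rmH$ applied to those of $\Afunc$ and that the homotopy-commuting squares defining a persistent CDGA become strictly commuting after applying $\rmH$; both follow from functoriality of $\rmH$ on $\Ho(\CDGA)$.

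For part (\ref{thm:interleaving-V,W}), the key input is the standard fact from rational homotopy theory that a morphism of simply-connected minimal Sullivan algebras $\wedge V \to \wedge W$ is a quasi-isomorphism if and only if it is an isomorphism, and more relevantly that the linear part of such a morphism — the induced map $V \to W$ on generators, equivalently $Q(\varphi) \colon Q(\wedge V) \to Q(\wedge W)$ on indecomposables — is a homotopy invariant: homotopic morphisms of minimal Sullivan algebras induce the same linear part (see Félix–Halperin–Thomas, where $V^n \cong \Hom(\pi_n, \mathbb{Q})$ and the linear part is identified with the dual of the rational homotopy map). Thus $V \mapsto$ (the functor sending $\wedge V$ to its space of indecomposables) is a well-defined functor $\Ho(\CDGA_{\mathrm{min}}^{1\text{-conn}}) \to \GVect$ on the full subcategory of simply-connected minimal Sullivan algebras. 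The plan is then identical to part (\ref{thm:interleaving-H}): a $\delta$-interleaving of $\wedge \Vfunc$ and $\wedge \Wfunc$ in $\Ho(\CDGA)$ — which, one should check, can be taken through minimal models, or alternatively one passes to minimal models of each term and uses that the linear part is read off from any representative — produces, by applying the linear-part functor, a $\delta$-interleaving of $\Vfunc$ and $\Wfunc$ in $\GVect$; infimizing over $\delta$ yields \eqref{eq:stab 2}.

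I expect the main obstacle to be the bookkeeping in part (\ref{thm:interleaving-V,W}) around the hypothesis: the interleaving in $\Ho(\CDGA)$ is a priori through arbitrary CDGA morphisms up to homotopy, not through morphisms of minimal Sullivan algebras, so one must argue that the linear-part assignment extends appropriately — either by noting that $\Ho$-morphisms between minimal Sullivan algebras are represented by genuine CDGA morphisms (lifting along the quasi-isomorphisms $\wedge V \xrightarrow{\sim} \APL$, using minimality and the lifting lemma for Sullivan algebras) and that the linear part of such a representative is well-defined on homotopy classes, or by invoking the equivalence of categories between simply-connected minimal Sullivan algebras (of finite type) and their homotopy category. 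Care is also needed regarding the simple-connectivity and finite-type hypotheses, which are exactly what guarantee that minimal models exist, are unique up to isomorphism, and that the indecomposables recover the rational homotopy groups; these hypotheses are built into the definition of persistent space in the excerpt, so they propagate to each $\wedge V_t$.
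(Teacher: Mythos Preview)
Your proposal is correct and follows essentially the same approach as the paper. The paper's proof is terser---for part~(\ref{thm:interleaving-H}) it simply cites the general functoriality result (\cref{thm:Bubenik}), and for part~(\ref{thm:interleaving-V,W}) it writes the chain $\di^{\GVect}(\Vfunc,\Wfunc)=\di^{\GVect}(\rmQ\circ\wedge\Vfunc,\rmQ\circ\wedge\Wfunc)\leq\di^{\Ho(\CDGA)}(\wedge\Vfunc,\wedge\Wfunc)$ using the natural isomorphism $V\cong\rmQ(\wedge V)$ and \cref{thm:Bubenik} again---but the substance is identical: both parts reduce to ``apply a functor out of $\Ho(\CDGA)$,'' and the nontrivial point you correctly isolate (that $\rmQ$ is well-defined on homotopy classes of maps between simply-connected minimal Sullivan algebras, via \cref{prop:linear part}~(\ref{prop:linear part-homotopy})) is exactly the content the paper sets up in \cref{sec:linear parts} before invoking \cref{thm:Bubenik}. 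Your worry in the last paragraph about interleavings passing through non-minimal objects is unfounded here: a $\delta$-interleaving in $\Ho(\CDGA)$ between $\wedge\Vfunc$ and $\wedge\Wfunc$ consists of $\Ho(\CDGA)$-morphisms among the objects $\wedge V_t$ and $\wedge W_s$ themselves, and since minimal Sullivan algebras are bifibrant these are represented by genuine CDGA morphisms, to which $\rmQ$ applies directly.
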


These results follow from functorial properties of minimal models and their indecomposables. Examples show that the inequalities can be strict (\cref{subsubsec:lower bounds}), illustrating the enhanced {discriminative power} of persistent Sullivan minimal models compared to traditional persistent homology.

\paragraph*{Organization.} 
In \cref{sec:preliminary},
We provide an overview of persistence theory (\cref{sec:persistence}) and Sullivan minimal models (\cref{sec:smm background}). 
In \cref{sec:pSMM}, we define a persistent Sullivan minimal models of persistent topological spaces and establish their properties.
Our first main result appear in \cref{thm:stab-PM}, showing that the interleaving distance in the homotopy category of CDGAs is stable under the Gromov-Hausdorff distance. 
Finally, we demonstrate in \cref{thm:lower-bound} that persistent Sullivan minimal models provide stronger distinguishing power than persistent (co)homology and rational homotopy groups.

\paragraph*{Acknowledgements.} 
The author was partially supported by the NSF through grants RI-1901360, CCF-1740761, CCF-1526513, and DMS-1723003. The author thanks Dr.~Eric Peterson for valuable discussions.
The author also thanks Prof.~Facundo M{\'e}moli for engaging in insightful discussions about the persistent rendition of Sullivan models and the pursuit of lower bounds for the interleaving distance between persistent Sullivan minimal models of metric spaces.

\section{Preliminary} 
\label{sec:preliminary}

In \cref{sec:persistence}, we provide background on persistence theory, covering persistent objects, the interleaving distance, the Vietoris–Rips filtration, the Gromov–Hausdorff distance, and the stability of persistent homology.  
In \cref{sec:smm background}, we review the definition and basic properties of Sullivan minimal models. 

\subsection{Persistence theory}
\label{sec:persistence}

\begin{definition}
    A \emph{persistent object} in a small category \(\catC\) is a functor \(F_\bullet: (\bbR, \leq) \to \catC\), where \((\bbR, \leq)\) is the poset category of real numbers under the usual order. More explicitly, a persistent object \(F_\bullet\) consists of:  
    \begin{itemize}
        \item an object \(F_t\) of \(\catC\) for each \(t \in \bbR\),
        \item a morphism \(f_{t,s}: F_t \to F_s\) for each \(t \leq s\), satisfying:
        \begin{itemize}
            \item \(f_{t,t} = \id_{F_t}\),
            \item \(f_{s,r} \circ f_{t,s} = f_{t,r}\) for all \(t \leq s \leq r\).
        \end{itemize}
    \end{itemize}
\end{definition}

Persistent objects in \(\topo\) are called \emph{persistent topological spaces}, while those in \(\Vect\) are known as \emph{persistent vector spaces} or \emph{(standard) persistence modules}.

\begin{definition}\label{def:interval-like persistent object}
    Let \(\catC\) be a category with a zero object \(0\). For an interval \(I \subset \bbR\) and an object \(M \in \catC\), the \emph{interval-like persistent object} associated to \(I\) and \(M\), denoted \(M[I]\), is the persistent object \(M[I]: (\bbR, \leq) \to \catC\) defined by
    \[
    M[I](t) := 
    \begin{cases}
    M, & t \in I, \\
    0, & t \notin I,
    \end{cases}
    \quad \text{and} \quad
    M[I](t \leq s) := 
    \begin{cases}
    \id_M, & [t,s] \subseteq I, \\
    0, & \text{otherwise.}
    \end{cases}
    \]
\end{definition}

\noindent
For example, in the category \(\Vect\), the interval module \(\bbQ[I]\) is the interval-like persistent object associated to $I$ and \(M = \bbQ\).

\paragraph*{Interleaving distance.}
\label{subsec:di dhi}
Let $ F_\bullet$ and $ F_\bullet'$ be two persistent objects in $\catC$. A natural transformation $f: F_\bullet\Rightarrow  F_\bullet',$
also called a \emph{homomorphism} from $ F_\bullet$ to $ F_\bullet'$, is a family of morphisms in $\catC$: $\{\varphi_t:F_t\to F_t'\}_{t\in \bbR }$ such that for any $t\leq s$ the following diagram commutes:
	\begin{center}
		\begin{tikzcd} 
			F_t \ar[d, "\varphi_t" left] 
			\ar[r,"f_{t,s}"]
			& 
			F_s			
			\ar[d,"\varphi_s "]
			\\
			F_t'
			\ar[r,"f'_{t,s}" below]
			& 
			 F'_{s}.
		\end{tikzcd}
	\end{center} 
    \label{para:Hom(V,W)}  
If $\varphi_t$ is an isomorphism in $\catC$ for each $t\in \bbR $, then $f=\{\varphi_t\}$ is called a \emph{(natural) isomorphism} between $ F_\bullet$ and $ F_\bullet'$, in which case we write $ F_\bullet\cong F_\bullet'.$

\begin{definition}
\label{def:delta-interleaving} 
For any $\delta\geq 0$, a \emph{$\delta$-interleaving} $( F_\bullet, F_\bullet',f,g)$ consists of families of morphisms $\{\varphi_t:F_t\to  F'_{t+\delta}\}_{t\in \bbR }$ and $\{\phi_t:F_t'\to  F_{t+\delta}\}_{t\in \bbR }$ such that the following diagrams commute for all $t\leq s$:
	\begin{center}
		\begin{tikzcd}[column sep={6em,between origins}]
			F_t \ar[dr, "\varphi_t" below left] 	
			\ar[r,"f_{t,s}"]
			& 
			F_s			
			\ar[dr,"\varphi_s " ]&
			\\
			& F'_{t+\delta}
			\ar[r,"f_{t+\delta,s+\delta}'" below]
			& 
			 F'_{s+\delta}
		\end{tikzcd}
\hspace{0.6cm}
	\begin{tikzcd}[column sep={6em,between origins}]
			&  F_{t+\delta}
			\ar[r,"f_{t+\delta,s+\delta}"]
			& 
			 F_{s+\delta}
			\\
			F_t'
			\ar[ur, "\phi_t"] 
			\ar[r,"f'_{t,s}" below]
			& 
			 F'_{s}
			\ar[ur,"\phi_s" below right]&
		\end{tikzcd}
	\end{center} 
and
\begin{center}
		\begin{tikzcd}
			F_t
			\ar[dr, "\varphi_t" below left ] 
			\ar[rr,"f_{t,t+2\delta}"]%
			&& 
			 F_{t+2\delta}		
			\\
			&  F'_{t+\delta}
			\ar[ur,"\phi_{t+\delta}" below right ]
			& 
		\end{tikzcd}
\hspace{0.6cm}
		\begin{tikzcd}
			&  F_{t+\delta}
			\ar[dr,"\varphi_{t+\delta}"]
			& 
			\\
			F_t'
			\ar[ur, "\phi_t"] 
			\ar[rr,"f'_{t,t+2\delta}" below]
			&& 
			 F'_{t+2\delta}.	
		\end{tikzcd}
	\end{center}
\end{definition}

\begin{definition}\label{def:interleaving}
Let $F_\bullet $ and $F_\bullet '$ be persistent objects in $\catC$.
The \emph{interleaving distance} between $ F_\bullet$ and $ F_\bullet'$ is 
		$$\di( F_\bullet, F_\bullet'):=\inf\{\delta\geq0:  F_\bullet \text{ and } F_\bullet'\text{ are }\delta\text{-interleaved}\}.$$
Here we follow the convention that $\inf \emptyset=+\infty.$ We will write $\di^{\catC}$ when we need to highlight the underlying category $\catC$.
\end{definition}

By \cite[Corollary 3.5]{Bubenik2014}, $\di$ is an extended metric on the set of isomorphism classes of persistent objects in $\catC$. In addition, the following property of the interleaving distance plays an important role for the stability results that we will discuss later:

\begin{theorem}[{\cite[\textsection 2.3]{bubenik2015metrics}}]
\label{thm:Bubenik}
For functors $ F_\bullet,\Gfunc:(\bbR,\leq)\to\catC$ and $\rmH:\catC\to \catD$,  
\[ d_{\mathrm{I}}^{\catD}(\rmH F_\bullet,\rmH\Gfunc)\leq d_{\mathrm{I}}^{\catC}( F_\bullet,\Gfunc).\]
\end{theorem}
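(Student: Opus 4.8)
The plan is to show that the functor $\rmH$ carries any $\delta$-interleaving between $F_\bullet$ and $\Gfunc$ in $\catC$ to a $\delta$-interleaving between $\rmH F_\bullet$ and $\rmH \Gfunc$ in $\catD$, from which the inequality follows by taking infima. First I would dispose of the trivial case: if $\di^{\catC}(F_\bullet,\Gfunc)=+\infty$, there is nothing to prove. Otherwise, fix any $\delta>\di^{\catC}(F_\bullet,\Gfunc)$, so that by \cref{def:interleaving} and \cref{def:delta-interleaving} there exist families $\{\varphi_t:F_t\to G_{t+\delta}\}_{t\in\bbR}$ and $\{\phi_t:G_t\to F_{t+\delta}\}_{t\in\bbR}$ witnessing a $\delta$-interleaving, i.e.\ satisfying the two commuting squares (compatibility with structure maps) and the two commuting triangles ($\phi_{t+\delta}\circ\varphi_t=f_{t,t+2\delta}$ and $\varphi_{t+\delta}\circ\phi_t=g_{t,t+2\delta}$, where $f$ and $g$ denote the structure morphisms of $F_\bullet$ and $\Gfunc$).

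Next I would observe that $\rmH F_\bullet=\rmH\circ F_\bullet$ and $\rmH\Gfunc=\rmH\circ\Gfunc$ are themselves functors $(\bbR,\leq)\to\catD$, i.e.\ persistent objects in $\catD$, whose structure morphisms are precisely $\rmH(f_{t,s})$ and $\rmH(g_{t,s})$; this uses only that $\rmH$ preserves identities and composition. I would then define candidate interleaving families by applying $\rmH$ componentwise, setting $\{\rmH(\varphi_t):\rmH F_t\to \rmH G_{t+\delta}\}$ and $\{\rmH(\phi_t):\rmH G_t\to \rmH F_{t+\delta}\}$. Because a functor sends commuting diagrams to commuting diagrams, applying $\rmH$ to each of the four diagrams witnessing the original interleaving produces exactly the four diagrams required of a $\delta$-interleaving between $\rmH F_\bullet$ and $\rmH\Gfunc$. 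Hence these are $\delta$-interleaved, so $\di^{\catD}(\rmH F_\bullet,\rmH\Gfunc)\leq\delta$, and taking the infimum over all such $\delta$ yields the claim.

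The argument is essentially a one-line diagram chase, so there is no serious technical obstacle; the only point deserving care is conceptual. The families $\{\varphi_t\}$ and $\{\phi_t\}$ are \emph{not} natural transformations—they shift the parameter by $\delta$—so one cannot invoke functoriality of natural transformations directly. What is used instead is only that $\rmH$ respects composition and identities at the level of individual morphisms, which is precisely enough to transport each commuting triangle, such as $\phi_{t+\delta}\circ\varphi_t=f_{t,t+2\delta}$, together with its three companion diagrams, to their images under $\rmH$. No hypotheses on $\catD$ or on $\rmH$ beyond functoriality are required, which is exactly why the inequality holds at this level of generality and may subsequently be applied to the homology functor, as well as to forgetful and indecomposables functors, as black boxes elsewhere in the paper.
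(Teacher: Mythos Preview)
Your argument is correct and is the standard proof of this fact. Note, however, that the paper does not actually supply its own proof of \cref{thm:Bubenik}; it merely cites the result from \cite[\textsection 2.3]{bubenik2015metrics} and uses it as a black box. There is therefore nothing to compare against, but your write-up would serve perfectly well as a self-contained justification. One very minor quibble: when you fix $\delta>\di^{\catC}(F_\bullet,\Gfunc)$, the definition of the infimum only guarantees a $\delta'$-interleaving for some $\delta'<\delta$, not necessarily a $\delta$-interleaving; this is harmless, since applying $\rmH$ then yields a $\delta'$-interleaving in $\catD$ and hence $\di^{\catD}(\rmH F_\bullet,\rmH\Gfunc)\leq\delta'<\delta$, and the infimum argument goes through unchanged.
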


\paragraph*{Gromov-Hausdorff distance.} The \emph{Hausdorff distance} between two subspaces $X$ and $Y$ of a metric space $Z$ is \label{para:dh}
\[d_\mathrm{H}^Z(X,Y):=\inf\left\{r>0: X\subseteq \bar{B}(Y,r)\text{ and }  Y\subseteq \bar{B}(X,r)\right \}.\]
The \emph{Gromov-Hausdorff distance} between metric spaces $(X,d_X)$ and $(Y,d_Y)$ is the infimum of $r>0$ for which there exist a metric spaces $Z$ and two distance preserving maps $\psi_X:X\to Z$ and $\psi_Y:Y\to Z$ such that $\dhaus^Z(\psi_X(X),\psi_Y(Y))<r$, i.e., 
\begin{equation}\label{eq:dgh}
\dgh(X,Y):=\inf_{Z,\psi_X,\psi_Y}\dhaus^Z(\psi_X(X),\psi_Y(Y)).
\end{equation}
See \cite{edwards1975structure,gromov2007metric} for further details.

\paragraph*{Vietoris-Rips filtration.}
For a metric space $(X,d_X)$ and $ t > 0$, the \emph{Vietoris–Rips complex} $\VR_{ t  }(X)$ is the simplicial complex with vertex set $X$, where 
\begin{center}
    a finite subset $\sigma\subset X$ is a simplex of $\VR_{ t  }(X)$ iff $\diam(\sigma)< t $.
\end{center} 
The collection $\{\VR_{ t  }(X)\}_{ t > 0}$ together with the natural simplicial inclusions forms a simplicial filtration, denoted by $\VR_{\bullet}(X)$, of the power set of $X$. For simplicity, we will use the same notation for both a simplicial complex and its corresponding geometric realization, when there is no danger of confusion.

We remark that the Vietoris–Rips complex can also be defined using the closed condition \(\diam(\sigma) \leq t\) instead. 
All results in this work hold for either convention, and for simplicity, we adopt the open version throughout.

Applying \( k \)-th homology with rational coefficients\footnote{In general, other coefficient fields may be considered, but we restrict to \(\mathbb{Q}\) in this work.} to the filtration \(\VR_{\bullet}(X)\) yields a persistent object, denoted \(\pHVR[\bullet]{ k }{X}\), called the \emph{\(k\)-th persistent homology} of \(X\), given by 
\[
t \mapsto \pHVR[t]{ k }{X},
\]  
together with the maps on homology induced by the natural simplicial inclusions. 

\paragraph*{Homotopy interleaving distance.} 
Blumberg and Lesnick pointed out that the interleaving distance between persistent spaces is not homotopy invariant (see Remark 3.3 of \cite{blumberg2017universality}).
To address this, they introduced \emph{homotopy interleavings}, which serve as homotopy-invariant analogs of interleavings between persistent spaces.

Given two persistent spaces $\Xfunc$ and $\Yfunc$, a natural transformation $f:\Xfunc\Rightarrow \Yfunc$ is an \emph{(objectwise) weak equivalence} if for each $t\in \bbR$, $\varphi_t:X_t\to Y_t$ is a weak homotopy equivalence, i.e., it induces isomorphisms on all homotopy groups. \label{para:weak h.e.} The persistent spaces $\Xfunc$ and $\Yfunc$ are \emph{weakly equivalent}, denoted by $\Xfunc\simeq \Yfunc$, if there exists a persistent space $\Wfunc$ and natural transformations $f:\Wfunc\Rightarrow \Xfunc$ and $g:\Wfunc\Rightarrow \Yfunc$ that are (objectwise) weak equivalences:
\[\Xfunc\xLeftarrow{f} \Wfunc\xRightarrow{g} \Yfunc.\]
The relation $\Xfunc\simeq \Yfunc$ is an equivalence relation (see \cite{blumberg2017universality} for details). For $\delta\geq 0$, two persistent spaces $\Xfunc$ and $\Yfunc$ are \emph{$\delta$-homotopy-interleaved} if there exist persistent spaces $\Xfunc'\simeq \Xfunc$ and $\Yfunc'\simeq \Yfunc$ such that $\Xfunc'$ and $\Yfunc'$ are $\delta$-interleaved, as in \cref{def:delta-interleaving} with $\catC=\topo$. \label{para:weakly equivalent}

\begin{definition}[
{\cite[Definition 3.6]{blumberg2017universality}}] \label{def:HI} The \emph{homotopy interleaving distance} between two persistent spaces $\Xfunc$ and $\Yfunc$ is given by
	$$\dhi(\Xfunc,\Yfunc):=\inf \left\{\delta\geq 0:\Xfunc,\Yfunc \text{ are }\delta\text{-homotopy-interleaved}\right\}.$$
In other words, $d_{\mathrm{HI}}(\Xfunc,\Yfunc)=\inf\left\{\di^{\topo}(\Xfunc',\Yfunc')\hspace{0.5em}\mid\Xfunc'\simeq \Xfunc\text{ and }\Yfunc'\simeq \Yfunc\right\}.$
\end{definition}

Blumberg and Lesnick \cite{blumberg2017universality} strengthened the previously mentioned stability result, \cref{thm:dgh-classical}, using the homotopy-interleaving distance and established the following:

\begin{theorem} 
[{\cite[Theorems 1.9 \& 1.10]{blumberg2017universality}}]
\label{thm:stability-HI}
Let $X$ and $Y$ be totally bounded metric spaces. Then, for any $k\in \bbZ_+,$
	$$\di (\pHVR[\bullet]{k}{X},\pHVR[\bullet]{k}{Y})\leq \dhi(  \VR_{\bullet}(X)  ,  \VR_{\bullet}(Y)  )\leq  2\cdot \dgh(X,Y).$$
\end{theorem}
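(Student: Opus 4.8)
The statement splits into two inequalities, which I would establish separately. The left inequality $\di(\pHVR[\bullet]{k}{X}, \pHVR[\bullet]{k}{Y}) \leq \dhi(\VR_\bullet(X), \VR_\bullet(Y))$ follows by combining the functoriality of the interleaving distance (\cref{thm:Bubenik}) with the homotopy invariance of singular homology. The plan is to fix any $\delta > \dhi(\VR_\bullet(X),\VR_\bullet(Y))$ and choose persistent spaces $\Xfunc'\simeq\VR_\bullet(X)$ and $\Yfunc'\simeq\VR_\bullet(Y)$ that are $\delta$-interleaved in $\topo$. Applying the homology functor $\rmH_k\colon\topo\to\Vect$ and invoking \cref{thm:Bubenik} with $\rmH=\rmH_k$ gives $\di^{\Vect}(\rmH_k\Xfunc',\rmH_k\Yfunc')\leq\delta$. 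Since a weak homotopy equivalence induces an isomorphism on homology in every degree, the zigzags witnessing $\Xfunc'\simeq\VR_\bullet(X)$ and $\Yfunc'\simeq\VR_\bullet(Y)$ produce natural isomorphisms $\rmH_k\Xfunc'\cong\pHVR[\bullet]{k}{X}$ and $\rmH_k\Yfunc'\cong\pHVR[\bullet]{k}{Y}$ of persistence modules. Because $\di$ is invariant under isomorphism of persistence modules, this yields $\di(\pHVR[\bullet]{k}{X},\pHVR[\bullet]{k}{Y})\leq\delta$, and taking the infimum over such $\delta$ finishes this half.

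For the right inequality $\dhi(\VR_\bullet(X),\VR_\bullet(Y))\leq 2\dgh(X,Y)$, I would pass from the Gromov--Hausdorff distance to a correspondence. Fix $r>\dgh(X,Y)$; then there is a correspondence $R\subseteq X\times Y$ whose metric distortion satisfies $\operatorname{dis}(R)<2r$ (this is where total boundedness guarantees a finite-distortion correspondence exists). Choosing selection maps $\phi\colon X\to Y$ with $(x,\phi(x))\in R$ and $\psi\colon Y\to X$ with $(\psi(y),y)\in R$, the distortion bound shows that $\phi$ and $\psi$ are \emph{simplicial} at shifted scales: $\phi$ carries $\VR_t(X)$ into $\VR_{t+2r}(Y)$ and $\psi$ carries $\VR_t(Y)$ into $\VR_{t+2r}(X)$, since $d(x_i,x_j)<t$ forces $d(\phi(x_i),\phi(x_j))<t+\operatorname{dis}(R)<t+2r$. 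These assemble into morphisms of persistent spaces realizing a candidate $2r$-interleaving.

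The two required interleaving triangles do not commute on the nose; they commute only up to contiguity. Indeed, for a simplex $\sigma=\{x_0,\dots,x_n\}\in\VR_t(X)$, the distortion bound gives $d(x_i,\psi\phi(x_i))<2r$, so $\sigma\cup\psi\phi(\sigma)$ has diameter $<t+4r$ and is therefore a simplex of $\VR_{t+4r}(X)$; thus $\psi\circ\phi$ is contiguous to the inclusion $\VR_t(X)\hookrightarrow\VR_{t+4r}(X)$, and symmetrically for $\phi\circ\psi$. Since contiguous simplicial maps are homotopic, $\phi$ and $\psi$ furnish a $2r$-interleaving that commutes only \emph{up to homotopy}. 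The main obstacle is then to upgrade this homotopy-commutative interleaving to an honest interleaving between persistent spaces weakly equivalent to $\VR_\bullet(X)$ and $\VR_\bullet(Y)$, as demanded by \cref{def:HI}. This rectification is the crux: it is carried out using the model structure on persistent spaces (via cofibrant--fibrant replacement), which replaces the homotopy-coherent data by a strictly commuting diagram within the same weak-equivalence classes. Granting this, the resulting objects are strictly $2r$-interleaved, so $\dhi(\VR_\bullet(X),\VR_\bullet(Y))\leq 2r$; letting $r\downarrow\dgh(X,Y)$ gives the claimed bound. I expect this rectification step to be the principal difficulty, since the contiguity homotopies must be made compatible across all scales $t$ simultaneously and coherently with the poset structure.
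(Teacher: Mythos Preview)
The paper does not prove \cref{thm:stability-HI}; it simply quotes the result from \cite{blumberg2017universality} and uses it as a black box (for instance, in the proof of \cref{thm:stab-PM}). So there is no in-paper proof to compare against.

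Your outline is nonetheless a faithful sketch of the Blumberg--Lesnick argument. The first inequality is exactly as you say: apply $\rmH_k$ and use that weak equivalences induce homology isomorphisms, so the weak-equivalence zigzag collapses to a natural isomorphism of persistence modules. For the second inequality, the correspondence/contiguity computation is correct, and you have correctly identified the rectification of a homotopy-coherent interleaving into a strict one between weakly equivalent replacements as the substantive step; this is precisely the content of \cite[Theorem~1.10 and \S4--5]{blumberg2017universality}, carried out via the projective model structure on $\topo^{(\bbR,\leq)}$.

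One small correction: your parenthetical ``this is where total boundedness guarantees a finite-distortion correspondence exists'' misattributes the hypothesis. The characterization $\dgh(X,Y)=\tfrac12\inf_R\operatorname{dis}(R)$ holds for arbitrary metric spaces, so for any $r>\dgh(X,Y)$ a correspondence with $\operatorname{dis}(R)<2r$ exists with no boundedness assumption. Total boundedness enters the statement only to guarantee that the persistent homology modules are well-behaved (q-tame), which is relevant for the \emph{left} inequality and for downstream uses involving barcodes, not for the Gromov--Hausdorff bound itself.
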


\subsection{Sullivan minimal models} \label{sec:smm background}

Sullivan minimal models have been important tools in rational homotopy theory, providing a simplified algebraic representation of topological spaces that facilitates the study of their properties and invariants.  
We review Sullivan minimal models from \cite{felix2001rational,Hess06,felix2017rational}.

\subsubsection{Commutative differential graded algebras (CGDAs)}

We work with graded vector spaces over \(\mathbb{Q}\), graded over the non-negative integers. A \emph{graded algebra} \(A\) is a graded vector space \(A = \bigoplus_{n\geq 0} A^n\) equipped with an associative multiplication \(A \otimes A \to A\) of degree \(0\), and a unit element \(1 \in A^0\).

\begin{definition}
\label{def:cdga}
A \emph{commutative differential graded algebra (CDGA)} is a graded algebra \(A = \bigoplus_{n\geq 0} A^n\) equipped with a differential \(\partial: A \to A\) satisfying
\[
\partial(ab) = (\partial a)b + (-1)^{\deg a} a(\partial b), \quad \partial^2 = 0,
\]
and the graded commutativity condition:
\[
ab = (-1)^{\deg a \cdot \deg b} ba,
\]
for all homogeneous elements \(a, b \in A\).  
If \(\partial\) has degree \(-1\) (resp. \(+1\)), then \((A, \partial)\) is called a \emph{chain} (resp. \emph{cochain}) algebra.

A CDGA \((A, \partial)\) is \emph{path-connected} if \(\rmH^0(A) = \bbQ\), and \emph{simply-connected} if, in addition, \(\rmH^1(A) = 0\).

\end{definition}

When the differential \(\partial\) is clear from context, we often omit it and refer to \(A\) as the CDGA.

\begin{example}\label{ex:Q as CDGA} 
The field \(\mathbb{Q}\) can be regarded as a CDGA concentrated in degree zero: let \(A^0 = \mathbb{Q}\), \(A^n = 0\) for all \(n > 0\), and define the differential \(\partial = 0\).
\end{example}

\begin{definition}
    A \emph{morphism of CDGAs} \(f: (A, \partial) \to (B, d)\) is a graded algebra homomorphism that preserves degrees, i.e., \(f(A^n) \subseteq B^n\) for all \(n\), and commutes with differentials, meaning \(f \circ \partial = d \circ f\).

    The morphism \(f\) is a \emph{quasi-isomorphism}, denoted \(f: A \xrightarrow{\simeq} B\), if it induces an isomorphism on cohomology.
    We say $A$ and $B$ are \emph{quasi-isomorphic} (via $f$).

    Two CDGAs \((A, \partial)\) and \((B, d)\) are \emph{weakly equivalent}, written \((A, \partial) \simeq (B, d)\), if there exists a finite zigzag of quasi-isomorphisms connecting them.
\end{definition}

Let $\CDGA$ denote the category of path-connected CDGAs.  

\medskip

A \emph{model structure} on a category \(\catC\) consists of three distinguished classes of morphisms—called \emph{weak equivalences}, \emph{fibrations}, and \emph{cofibrations}—satisfying a collection of axioms; see \cite[page 233]{Quillen1969} for details. A category equipped with such a structure is called a \emph{model category}. The associated \emph{homotopy category}, denoted \(\Ho(\catC)\), has as objects the bifibrant objects of \(\catC\) (i.e., both fibrant and cofibrant), and morphisms are homotopy classes of morphisms in \(\catC\). \label{para:homotopy category}

The category \(\CDGA\) admits a model structure in which:
\begin{itemize}
    \item the weak equivalences are the quasi-isomorphisms, 
    \item the fibrations are the surjective morphisms,
    \item cofibrantions include all Sullivan CDGAs, and in particular, all minimal Sullivan algebras are cofibrant.
\end{itemize}
See \cite[page 335]{gelfand2013methods} for more details.

\subsubsection{Sullivan algebras}

The \emph{free CDGA}, also called the \emph{exterior algebra}, generated by a graded vector space $V=\bigoplus_{n\geq 0} V^n$ is a commutative graded algebra defined as the quotient
\[\wedge V:=\left(\bigoplus_{k=0}^{\infty}\big(\otimes ^k V\big)\right)\Big/ \big \langle v_1\otimes v_2-(-1)^{\deg v_1\deg v_2} v_2\otimes v_1,\, \forall v_1, v_2\in V \big \rangle.\]
Note that $\wedge V = \bigoplus_{\ell>0}^\infty \wedge^\ell V$, where $\wedge^\ell V$ is the linear span of the elements \(v_1 \wedge \cdots \wedge v_\ell\), where each \(v_i \in V\); 
these elments have degree equal to $\sum_i \deg v_i$ and \emph{word-length} \(\ell\). 
We write \(\wedge^{\geq \ell} V := \bigoplus_{k \geq \ell} \wedge^k V\) and $\wedge^+ V := \wedge^{\geq 1}V$.

\begin{definition}[{\cite[page 181]{felix2001rational}}]  
\label{def:sullivan alg}  
    A \emph{Sullivan algebra} is a CDGA of the form \((\wedge V, \partial)\), where $V=\bigoplus_{n> 0} V^n$ and \(V = \bigcup_{k=0}^{\infty} V(k)\) is a filtered union of graded subspaces  
    \(
    V(0) \subset V(1) \subset \cdots
    \)  
    satisfying:
    \[
    \partial: V(0) \to 0 \quad \text{and} \quad \partial: V(k) \to \wedge V(k-1) \text{ for all } k \geq 1.
    \]  
    
    A Sullivan algebra \((\wedge V, \partial)\) is \emph{minimal} if  
    \(
    \im(\partial) \subset \wedge^{+} V \cdot \wedge^{+} V.
    \)  
\end{definition}  

Let $\wedge(t,\partial t)$ be the free CDGA on the basis $\{t, \partial t\}$ with $\deg t=0$ and $\deg \partial t=1$.
Let $\epsilon_0,\epsilon_1:\wedge(t,\partial t)\to \bbQ$ be two CDGA morphisms given by $\epsilon_0(t)=0$ and $\epsilon_1(t)=1$, respectively. 
\begin{definition}
\label{def:homotopy of CDGA morphism}
    Two CGDA morphisms $\varphi_0,\varphi_1:(\wedge V,\partial)\to (A,\partial)$ from a Sullivan algebra to a general CDGA are \emph{homotopic}, denoted by 
    \(\varphi_0 \sim \varphi_1,\) 
    if there is a CGDA morphism $\Phi:(\wedge V,\partial)\to \wedge(t,\partial t)\otimes(A,\partial)$ such that $(\epsilon_i\otimes\id)\circ\Phi =\varphi_i$. 
    Such $\Phi$ is called a \emph{homotpy} from $\varphi_0$ to $\varphi_1$.
\end{definition}



\subsubsection{Simplicial CDGAs}

A \emph{simplicial set} consists of a sequence of sets \(\{X_n\}_{n \geq 0}\), together with \emph{face maps} \(\partial_i: X_n \to X_{n-1}\) and \emph{degeneracy maps} \(s_i: X_n \to X_{n+1}\) for all \(i = 0, \dots, n\), satisfying the following \emph{simplicial identities}:
\begin{itemize}
    \item \(\partial_j \partial_i = \partial_i \partial_{j-1}\) for \(i < j\).
    \item \(s_j s_i = s_i s_{j+1}\) for \(i \leq j\).
    \item 
   \(
   \partial_j s_i =
   \begin{cases}
   s_{i-1} \partial_j & \text{if } j < i, \\
   \operatorname{id}  & \text{if } j = i \text{ or } j = i+1, \\
   s_i \partial_{j-1} & \text{if } j > i+1.
   \end{cases}
   \)
\end{itemize}
A \emph{simplicial morphism} is a map between simplicial sets \(\{X_n\}\) and \(\{Y_n\}\) is a collection of maps \(f_n: X_n \to Y_n\) that commute with face and degeneracy maps.
Let $\SSet$ be the category of simplicial sets.

A \emph{simplicial CDGA} is a simplicial object in the category of CDGAs, meaning it consists of a sequence of CDGAs with face and degeneracy maps that respect both the simplicial and differential structures.

\begin{definition}
The \emph{algebra of polynomial differential forms}, denoted by $\APL$, is the simplicial CGDA given as follows: for each $n\geq 0$,
\begin{equation}\label{eq:A_PL CDGA}
    (\APL)_n := \wedge(t_0,\cdots,t_n, \partial t_0,\cdots,\partial t_n)\Big/\left(\sum t_i-1,\sum \partial t_i\right) \in \CDGA,
\end{equation}
where $t_i$ denotes a variable of degree $0$ for each $0\leq i\leq n$ and $n\geq 0$. The face and degeneracy morphisms are given by
    \[\partial_i(t_j)=
    \begin{cases}
        t_j,&\mbox{$j<i$,}\\
        0,&\mbox{$j=i$,}\\
        t_{j-1},&\mbox{$j>i$,}
    \end{cases}
    \quad\text{ and }\quad
    s_j(t_i)=
    \begin{cases}
        t_i,&\mbox{$i< j,$}\\ 
        t_i+t_{i+1},&\mbox{$i=j$,}\\
        t_{i+1}.&\mbox{$i> j.$}
    \end{cases}
    \]
\end{definition}

\begin{remark}
    The name and notation \(\APL\) originate from the fact that each element in \((\APL)_n\) corresponds to a polynomial differential form on the Euclidean \(n\)-simplex, given by  
    \[
    \Delta^n := \left\{ (t_0, \dots, t_n) \in \mathbb{R}^{n+1} \mid t_0 + \cdots + t_n = 1, \quad t_i \geq 0 \text{ for all } i \right\}.
    \]
\end{remark}

We observe that, as a simplicial CDGA, \(\APL\) is characterized by two parameters. The first, typically denoted by the subscript \(n\), corresponds to the \emph{simplicial dimension}, while the second, often denoted by the superscript \(p\), represents the \emph{grading}. 
\begin{itemize}
    \item Fixing \(n\) while varying \(p\) yields a CDGA; see \cref{eq:A_PL CDGA}.
    \item Fixing \(p\) while varying \(n\) defines a simplicial set: 
    \begin{equation}\label{eq:A_PL SSet}
     \left( \APL\right)^p :=\left\{ \left( \APL\right)^p_n \right\}_{n\geq 0}\in \SSet.
    \end{equation}
\end{itemize}
   
Given $K\in \SSet$, let $\SSet\left( K,\left( \APL\right)^p\right)$ be the set of simplicial set morphisms from $K$ to $\left( \APL\right)^p$. Define
\[\SSet(K,\APL):= \bigoplus_{p\geq 0}\, \SSet\left( K,\left( \APL\right)^p\right), \]
which naturally admits a CDGA structure. In particular, we obtain a functor 
\[
\begin{tikzcd}[row sep=tiny, column sep = small]
    {\SSet(\cdot, \APL)}: &\SSet \arrow[r] & \CDGA \\
    & K \arrow[r, maps to] & \SSet(K, \APL).
\end{tikzcd}
\]

\begin{definition}[$\APL(X)$]
\label{def:A_PL(X)}
    For any topological space $X$, let $\rmS_{*}(X):=\{\rmS_n(X)\}_{n\geq 0}$ be the singular complex of $X$, where each $\rmS_n(X)$ is the set of singular simplices $\sigma:\Delta^n\to X$. The \emph{cochain algebra of polynomial forms on $X$} is defined as \[\APL(X):=\SSet(\rmS_{*}(X),\APL).\] 
\end{definition} 

\subsubsection{Sullivan minimal models}
 \label{subsec:S.m.m for space}

\begin{definition}[$\smm{A,\partial}$ and $\smm{X}$] 
\label{def:Smm of space}
A \emph{Sullivan model} for a CDGA $(A,\partial)$ is a Sullivan algebra, denoted by $\smm{A,\partial}: = (\wedge V,\partial)$, together with a quasi-isomorphism 
    \[\mu_A:(\wedge V,\partial)\xrightarrow{\simeq} (A,\partial).\]
If \((\wedge V, \partial)\) is minimal, then it is called the \emph{Sullivan minimal model} for $(A,\partial)$.
For simplicity, we often omit the quasi-isomorphism \(\mu_A\) and write \(\smm{(A, \partial)}\).

A \emph{Sullivan (minimal) model} for a path-connected topological space \(X\) is a Sullivan (minimal) model for \(\APL(X)\), denoted \(\smm{\APL(X)}\) or simply \(\smm{X}\).
\end{definition}


\begin{example}[{\cite[page 7]{felix2017rational}}]\label{ex:s.m.m} 
Let $n\geq 1$.
\begin{itemize}
    \item $\smm{\bbS^{2n-1}}=(\wedge u,0)$ with $\deg u=2n+1$;
    \item $\smm{\bbS^{2n}}=(\wedge (a,b),\partial)$ with $\deg a=2n$ and $\partial a=0,\partial b=a^2$, for $n\geq 1$;
    \item $\smm{X\times Y}\cong \smm{X}\otimes \smm{Y}$ if one of $\rmH^*(X;\bbQ)$ or $\rmH^*(Y,\bbQ)$ if of finite type;
    \item $\smm{X\vee Y}\simeq \smm{X}\oplus_{\bbQ} \smm{Y}$;
    \item $\smm{K(\bbZ,n)}=(\wedge a,0)$ with $\deg a=n$, where $K(\bbZ,n)$ is the Eilenberg–MacLane space such that its $n$-th homotopy group is isomorphic to $\bbZ$ and all other homotopy groups are trivial. 
\end{itemize}
\end{example}

\begin{proposition} [{\cite[\textsection 12]{felix2001rational}}]\label{prop:iso=q.iso} 
\label{prop:existence and uniqueness}
\begin{enumerate}
    \item A quasi-isomorphism between two simply-connected minimal Sullivan algebras is an isomorphism.
    \item Let $A$ be a path-connected CDGA (or let $X$ be a path-connected space). Then a Sullivan minimal model for $A$ (or $X$) exists and is unique up to isomorphism of CDGAs.  
\end{enumerate}
\end{proposition}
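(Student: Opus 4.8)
The plan is to prove the two parts in the order stated, since part (2) builds on the technology used for part (1). For part (1), given a quasi-isomorphism $\varphi:(\wedge V,\partial)\to(\wedge W,\partial)$ between simply-connected minimal Sullivan algebras, I would pass to indecomposables. Minimality means $\partial(V)\subset\wedge^{\geq 2}V$, so the induced differential on the indecomposable quotient $Q(\wedge V)=\wedge^+V/\wedge^{\geq 2}V\cong V$ is zero, and likewise for $W$; hence $\varphi$ induces a linear map $Q(\varphi):V\to W$ of graded vector spaces. The key claim is that $Q(\varphi)$ is an isomorphism, which one proves by induction on degree: in each degree $n$, filter by word-length and compare the low-degree part of the minimal models with $\rmH^{\leq n}$, using that simple-connectivity ($V^0=V^1=0$) makes the induction start cleanly and the spectral sequence / degreewise bookkeeping terminate. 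Once $Q(\varphi)$ is an isomorphism, a standard filtration argument (induct over the Sullivan filtration $V(0)\subset V(1)\subset\cdots$, lifting isomorphisms stage by stage using that $\varphi$ is already an iso modulo decomposables) upgrades $\varphi$ itself to an isomorphism of CDGAs.

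For part (2), existence: I would construct $\smm{A}=(\wedge V,\partial)$ together with $\mu_A:(\wedge V,\partial)\xrightarrow{\simeq}(A,\partial)$ by the classical inductive Sullivan construction. Build $V=\bigcup_k V(k)$ by increasing degree; at each stage adjoin generators in degree $n$ to (i) surject onto $\rmH^n$ that is not yet hit — choosing cocycles mapping to representatives — and (ii) kill the kernel of $\rmH^n(\wedge V(k))\to\rmH^n(A)$ by adjoining generators of degree $n$ whose differential is the offending cocycle, pushed into $\wedge^{\geq 2}$ by subtracting off decomposable corrections to preserve minimality. One checks at each step that $\mu_A$ extends (using that $A$ is path-connected, $\rmH^0(A)=\bbQ$, to start the induction) and that in the colimit $\mu_A$ is a quasi-isomorphism in every degree. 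For uniqueness: if $(\wedge V,\partial)\xrightarrow{\mu}A$ and $(\wedge W,\partial)\xrightarrow{\nu}A$ are two Sullivan minimal models, use the lifting property of the cofibrant object $(\wedge V,\partial)$ against the acyclic fibration obtained from $\nu$ (replacing $\nu$ by a surjection via a standard path-object/mapping-cylinder trick, or citing the model structure on $\CDGA$ from the excerpt) to get $\psi:(\wedge V,\partial)\to(\wedge W,\partial)$ with $\nu\psi\sim\mu$; then $\psi$ is a quasi-isomorphism by two-out-of-three, and part (1) makes it an isomorphism — here I should note the simply-connected hypothesis in part (1) is relaxed to path-connected via the same indecomposables argument applied degreewise, which is where citing \cite[\textsection 12]{felix2001rational} for the general minimality/uniqueness statement is cleanest.

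The main obstacle is the degreewise control in part (1): showing $Q(\varphi)$ is an isomorphism requires carefully tracking how $\rmH^n$ of a minimal Sullivan algebra decomposes into the new generators in degree $n$ versus contributions of products of lower generators and their differentials, and the bookkeeping is delicate precisely because $\partial$ mixes word-lengths. In the simply-connected case one can run an induction on $n$ where $\rmH^1(\wedge V)=V^1=0$ removes the worst interactions; in the merely path-connected case ($\rmH^1$ possibly nonzero, as needed for full generality in part (2)) the argument is more subtle and is the reason I would lean on the cited reference rather than reproduce it in full. The existence construction in part (2) is routine but lengthy; I expect to present it in compressed form, emphasizing only the two extension moves and the minimality correction, and defer the verification that the colimit map is a quasi-isomorphism to the standard reference.
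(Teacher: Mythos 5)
The paper gives no proof of this proposition; it is quoted verbatim from F\'elix--Halperin--Thomas \S 12, so the only meaningful comparison is with the textbook argument, and your sketch does reconstruct that argument faithfully: part (1) via the indecomposables functor $\rmQ$ (quasi-isomorphism $\Rightarrow$ $\rmQ(\varphi)$ is an isomorphism, then a word-length/degree induction, using $V^1=0$ to make the filtration in each degree finite, upgrades $\varphi$ to an isomorphism), and part (2) via the inductive adjunction of generators plus the lifting lemma for uniqueness. Two small remarks. First, there is an off-by-one in your existence step (ii): to kill a kernel class in $\rmH^{n+1}(\wedge V(k))\to\rmH^{n+1}(A)$ one adjoins generators in degree $n$ whose differentials represent that class; as written you adjoin degree-$n$ generators to kill classes in $\rmH^n$, which is impossible for degree reasons. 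Second, you are right to flag that uniqueness in part (2) for merely path-connected $A$ does not follow from part (1): it needs the harder theorem that a quasi-isomorphism between arbitrary (not necessarily simply-connected) minimal Sullivan algebras is an isomorphism, where minimality must be taken to include the Sullivan filtration condition of \cref{def:sullivan alg} and the degree-$1$ part requires a separate nilpotence-type argument; deferring that to the cited reference is the appropriate choice here, and is exactly what the paper itself does.
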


\subsubsection{Sullivan representative of a map}
For a map $f:(A,\partial)\to (B,\partial)$ between two path-connected CDGAs, there is an induced map $\smm{f}$ between the Sullivan minimal models of $(A,\partial)$ and $(B,\partial)$, given by the following proposition.
\begin{proposition}[{\cite[Proposition 12.9]{felix2001rational}}]
\label{prop: S-representative} 
Let $f:(A,\partial)\to (B,\partial)$ be a CDGA morphism between path-connected CDGAs.  
Let $\mu_A:(\wedge V,\partial)\xrightarrow{\simeq}A$ and $\mu_B:(\wedge W,\partial)\xrightarrow{\simeq}B$ be Sullivan minimal models of $A$ and $B$, respectively. 
Then there is a morphism $\smm{f}:(\wedge V,\partial)\to (\wedge W,\partial)$ such that
\begin{center}
	\begin{tikzcd}
(\wedge V,\partial)
	\ar[d, "\smm{f}" left]
	\ar[r, "\mu_A" above, "\simeq" below] 
    & 
A
	\ar[d,"f" right]
		\\
(\wedge W,\partial)
	\ar[r,"\mu_B" below, "\simeq" above]
	& 
B.
	\end{tikzcd}
\end{center}
commutes up to homotopy. 
The morphism $\smm{f}$ is called a \emph{Sullivan representative} of $f.$ The homotopy class of $\smm{f}$ is determined by the homotopy class of $f$.
\end{proposition}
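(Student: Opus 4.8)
The plan is to reduce the statement to a single \emph{lifting principle}: whenever $(\wedge V,\partial)$ is a Sullivan algebra and $\eta\colon(C,d)\xrightarrow{\simeq}(D,d)$ is a quasi-isomorphism of path-connected CDGAs, post-composition with $\eta$ induces a bijection $\eta_*\colon[(\wedge V,\partial),(C,d)]\to[(\wedge V,\partial),(D,d)]$ on homotopy classes of CDGA morphisms, with homotopy understood as in \cref{def:homotopy of CDGA morphism}. Granting this, the proposition follows at once: apply the principle with $\eta=\mu_B$ and take $\smm{f}$ to be any representative of $\mu_{B*}^{-1}\bigl([\,f\circ\mu_A\,]\bigr)$; then $\mu_B\circ\smm{f}\sim f\circ\mu_A$, which is exactly the asserted homotopy-commutativity of the square. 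For the final sentence, precomposition with the fixed morphism $\mu_A$ carries homotopic maps to homotopic maps, so homotopic $f,f'$ yield $f\circ\mu_A\sim f'\circ\mu_A$, and injectivity of $\mu_{B*}$ then forces $\smm{f}\sim\smm{f'}$.

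To prove the lifting principle I would first treat the case where $\eta$ is \emph{surjective}, showing that every $\psi\colon(\wedge V,\partial)\to(D,d)$ admits an honest lift $\phi$ with $\eta\circ\phi=\psi$ --- this is the left lifting property of a Sullivan algebra (a cofibrant object) against trivial fibrations. One argues by induction along the Sullivan filtration $V(0)\subset V(1)\subset\cdots$. On $V(0)$, where $\partial$ vanishes, closed generators lift using surjectivity of $\eta$ and the acyclicity of $\ker\eta$ (immediate from the long exact sequence, since $\eta$ is a surjective quasi-isomorphism). For the inductive step, a generator $v\in V(k)$ has $\partial v\in\wedge V(k-1)$, where $\phi$ is already defined; then $\phi(\partial v)$ is a cocycle of $C$ with $\eta(\phi(\partial v))=\partial\psi(v)$, so for any $c_0$ with $\eta(c_0)=\psi(v)$ the cocycle $\phi(\partial v)-\partial c_0$ lies in the acyclic ideal $\ker\eta$ and hence equals $\partial\kappa$ for some $\kappa\in\ker\eta$; setting $\phi(v):=c_0+\kappa$ gives $\partial\phi(v)=\phi(\partial v)$ and $\eta\phi(v)=\psi(v)$. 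Extending multiplicatively (legitimate since $\wedge V(k)$ is free) and taking the union over $k$ produces $\phi$.

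For an arbitrary quasi-isomorphism $\eta\colon(C,d)\xrightarrow{\simeq}(D,d)$, I would pass to the surjective case via the path object $D^I:=\wedge(t,\partial t)\otimes(D,d)$ with its evaluations $\epsilon_0,\epsilon_1\colon D^I\to D$, each a surjective quasi-isomorphism, and the pullback $C':=C\times_D D^I$ formed along $\eta$ and $\epsilon_0$. A short diagram chase shows that the projection $\mathrm{pr}_C\colon C'\to C$ is a pullback of the trivial fibration $\epsilon_0$, hence itself a quasi-isomorphism, while $q:=\epsilon_1\circ\mathrm{pr}_{D^I}\colon C'\to D$ is a \emph{surjective} quasi-isomorphism. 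Lifting $\psi\colon\wedge V\to D$ through $q$ by the surjective case gives $\tilde\psi\colon\wedge V\to C'$, and then $\phi:=\mathrm{pr}_C\circ\tilde\psi$ satisfies $\eta\circ\phi\sim\psi$, the homotopy being the composite $\wedge V\xrightarrow{\tilde\psi}C'\xrightarrow{\mathrm{pr}_{D^I}}D^I$; this yields surjectivity of $\eta_*$. Injectivity follows by the analogous argument applied to the path object $\wedge(t,\partial t)\otimes(C,d)$ of $C$ (alternatively, by the general model-categorical fact that a weak equivalence between fibrant objects induces a bijection on homotopy classes of maps out of a cofibrant object, applicable here since every object of $\CDGA$ is fibrant and minimal Sullivan algebras are cofibrant). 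One could equally well bypass the path-object reduction and construct the lift together with the accompanying homotopy in a single induction over the Sullivan filtration.

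The principal obstacle is the inductive construction over the Sullivan filtration: the lift must be simultaneously an algebra map (automatic from freeness of $\wedge V$), a chain map (this is where the acyclicity of $\ker\eta$ enters, once at each stage), and compatible with $\psi$, and these constraints must be kept coherent up the filtration. The second, milder subtlety is the passage from surjective quasi-isomorphisms to arbitrary ones, which genuinely requires working up to homotopy --- exactly the point where the path-object machinery, and hence the fact that the conclusion is about homotopy classes rather than strict morphisms, becomes indispensable.
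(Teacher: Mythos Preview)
The paper does not give its own proof of this proposition; it is quoted verbatim from \cite[Proposition~12.9]{felix2001rational} and used as a black box. Your argument is correct and is essentially the standard proof found in that reference: the ``lifting principle'' you isolate is precisely the content of \cite[Proposition~12.9]{felix2001rational} (with the surjective case being their Lifting Lemma~12.4), and the deduction of the Sullivan representative from it is immediate, exactly as you describe. Your inductive construction over the Sullivan filtration and the path-object reduction to the surjective case are both standard and sound; the only minor remark is that acyclicity of $\ker\eta$ is needed only in positive degrees, which is guaranteed since $V=\bigoplus_{n>0}V^n$ for a Sullivan algebra, so the degree-zero subtleties you might worry about do not arise.
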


\begin{remark}\label{rmk:SAlg functor}
\cref{prop:iso=q.iso} and \cref{prop: S-representative} together implies that the map assigning to a path-connected CDGA $(A,\partial)$ a Sullivan minimal model $\smm{A,\partial}$ and to a CDGA morphism $f$ a Sullivan representative $\smm{f}$ gives rise to a functor
\[\smm{\cdot}:\Ho(\CDGA)\to \Ho(\CDGA) .\]
Here, $\Ho(\CDGA)$ denotes the homotopy category of $\CDGA$ (see page \pageref{para:homotopy category} for details). 
\end{remark}

\subsubsection{Relation to rational homotopy groups and rational (co)homology}

Sullivan minimal models encode rich homotopical information. In particular, they recover both the rational homotopy groups and the rational (co)homology of simply-connected spaces.

Given two graded vector spaces $V=\bigoplus_{n\geq 0} V^n$ and $W=\bigoplus_{n\geq 0} W^n$, a linear map $f:V\to W$ is of \emph{degree $k$} if
if it satisfies  
\[
f(V^n) \subseteq W^{n+k} \quad \text{for all } n \in \mathbb{Z}.
\]  
Equivalently, \( f \) is a family of linear maps \( f_n: V^n \to W^{n+k} \) for each \( n \). 
We denote by 
\[\Hom^k_{\GVect}(V,W):=\prod_{n\geq 0} \Hom_{\vs}(V^n,W^{n+k})\] 
the set of linear maps of degree $k$ from $V$ to $W$, and by $\Hom_{\GVect}(V,W)$ the graded vector space whose elements of degree $k$ are the linear maps from $V$ to $W$ of degree $k$.
That is,
\[\Hom_{\GVect}(V,W):=\bigoplus_{k\in \bbZ}\Hom^k(V,W) = \bigoplus_{k\in \bbZ} \prod_{n\geq 0} \Hom_{\vs}(V^n,W^{n+k}).\]

For example, if $W=\bbQ$ (cf. \cref{ex:Q as CDGA}), then $\Hom^k_{\GVect}(V,\bbQ)\cong V^{-k}$ for all $k\in \bbZ$ and  
\[\Hom_{\GVect}(V,\bbQ):=\bigoplus_{k\in \bbZ}V^{-k}.\]

Below is a classical result relating rational homotopy groups to Sullivan models.

\begin{theorem} [{\cite[Theorem 4.1]{felix2017rational}}] \label{thm:rational homotopy v.s. V}
If $\mu_X:(\wedge V,\partial)\xrightarrow{\simeq}\APL(X)$ is a Sullivan minimal model of a simply-connected space with finite Betti numbers, then there is an isomorphism of graded vector spaces:
\[\pi_*(\mu_X): \pi_*(X)\otimes\bbQ \xrightarrow{\cong} \Hom_{\GVect}(V,\bbQ),\]
where $\pi_*(\mu_X)$ is natural with respect to morphisms of minimal Sullivan algebras and with respect to based maps of CW complexes.
\end{theorem}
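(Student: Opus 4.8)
The plan is to prove the degreewise statement $\pi_n(X)\otimes\bbQ\cong\Hom_{\GVect}(V^n,\bbQ)$ by first passing to the rationalization of $X$ and then inducting along the canonical filtration $V(0)\subset V(1)\subset\cdots$ of the minimal model, matching each elementary stage with a principal fibration in a rational Postnikov tower. The base case is exactly the Eilenberg--MacLane computation $\smm{K(\bbZ,n)}=(\wedge a,0)$ recorded in \cref{ex:s.m.m}, where the single generator $a\in V^n$ is dual to $\pi_n(K(\bbZ,n))\otimes\bbQ\cong\bbQ$; the inductive step propagates this identification through fibrations.

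First I would reduce to the rational case. Since $X$ is simply connected with finite Betti numbers, it admits a rationalization $X\to X_\bbQ$ inducing isomorphisms $\pi_*(X)\otimes\bbQ\xrightarrow{\cong}\pi_*(X_\bbQ)$ and a weak equivalence $\APL(X)\simeq\APL(X_\bbQ)$; hence $(\wedge V,\partial)$ is simultaneously the minimal model of both (\cref{prop:iso=q.iso}), and it suffices to compute $\pi_*(X_\bbQ)$ in terms of $V$. Next I would invoke Sullivan's spatial realization functor: for a minimal Sullivan algebra of finite type the realization $\langle\wedge V,\partial\rangle$ is a rational space whose own minimal model is again $(\wedge V,\partial)$, so the quasi-isomorphism $\mu_X$ produces a weak equivalence $X_\bbQ\simeq\langle\wedge V,\partial\rangle$. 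This converts the problem into the purely algebraic task of reading $\pi_n$ of the realization off the generating space $V$.

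The heart of the argument is the induction on the filtration of \cref{def:sullivan alg}. Each inclusion $(\wedge V(k-1),\partial)\hookrightarrow(\wedge V(k),\partial)$ adjoins generators whose differential lands in $\wedge V(k-1)$, and minimality forces $\partial$ of a degree-$n$ generator into the decomposables $\wedge^{\geq 2}V$. Under realization such a Hirsch extension becomes a principal fibration whose fiber is a product $\prod K(\bbQ,n)$, with one factor for each new generator in degree $n$. Feeding this into the long exact homotopy sequence, and using the base case so that $\pi_n$ of each $K(\bbQ,n)$ factor is the one-dimensional dual of its generator, I would build the isomorphism degree by degree: the generators newly adjoined in degree $n$ contribute precisely $\Hom_{\GVect}(V^n,\bbQ)$ to $\pi_n$, while minimality (the vanishing linear part of $\partial$) guarantees the connecting maps do not perturb this count. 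A diagram chase over the tower assembles these into $\pi_*(X_\bbQ)\cong\Hom_{\GVect}(V,\bbQ)$.

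To upgrade this to the asserted natural isomorphism rather than a mere abstract one, I would pin down an explicit pairing $\pi_n(X)\otimes V^n\to\bbQ$: given $[\alpha]\in\pi_n(X)$ with representative $\alpha\colon\bbS^n\to X$ and $v\in V^n$, take a Sullivan representative $\smm{\alpha}\colon(\wedge V,\partial)\to\smm{\bbS^n}$ (\cref{prop: S-representative}) and record the image of $\smm{\alpha}(v)$ in $\rmH^n(\smm{\bbS^n})\cong\bbQ$; this is well defined because homotopic maps agree on cohomology, and functoriality of $\smm{\cdot}$ on $\Ho(\CDGA)$ (\cref{rmk:SAlg functor}) makes it natural in based maps. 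The induction above shows the pairing is perfect, yielding $\pi_*(\mu_X)$. The main obstacle is the construction and control of the spatial realization functor in the previous two steps: proving that it carries minimal Sullivan extensions to rational principal fibrations with Eilenberg--MacLane fibers dual to the adjoined generators is the genuine content of Sullivan's equivalence of homotopy categories, and it is precisely here that the hypotheses of simple connectivity and finite Betti numbers are needed---to ensure the fibrations are principal, the fibers are the expected rational Eilenberg--MacLane spaces, and each $V^n$ is finite dimensional.
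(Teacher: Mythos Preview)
The paper does not give its own proof of this theorem: it is stated as background and attributed to \cite[Theorem 4.1]{felix2017rational}, with no argument supplied. There is therefore nothing in the paper to compare your proposal against.

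For what it is worth, your outline is a faithful sketch of the classical argument in the cited source: reduce to the rational space via $X\to X_\bbQ$, realize the minimal model spatially, and then induct along the filtration of \cref{def:sullivan alg}, interpreting each Hirsch extension as a principal fibration with Eilenberg--MacLane fiber so that the long exact sequence identifies the new generators in degree $n$ with the dual of the new piece of $\pi_n$. Your explicit pairing via Sullivan representatives of maps $\bbS^n\to X$ is also the standard way to pin down naturality. The one place where your sketch is honest about its own limits is the right one: the substantive work is the construction and properties of the spatial realization functor (that it sends relative Sullivan algebras to fibrations with the correct fibers, and that it is inverse to $\APL$ on the relevant homotopy categories), and that is precisely what the cited reference supplies.
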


Rational cohomology is also recovered from the minimal model via
\[
\rmH^*(\wedge V, \partial) \cong \rmH^*(X; \mathbb{Q}).
\]


Finally, the following rational Hurewicz theorem connects rational homotopy and homology.
It is first due to Dyer \cite{dyer1972}, and a more elementary proof is later established in \cite{klaus_kreck_2004}. 

\begin{theorem}[Rational Hurewicz, {\cite{dyer1972,klaus_kreck_2004}}]
Let \(X\) be a simply-connected space such that \(\pi_i(X) \otimes \mathbb{Q} = 0\) for \(1 < i < n\). Then the rational Hurewicz map
\[
h \colon \pi_i(X) \otimes \mathbb{Q} \to \rmH_i(X; \mathbb{Q})
\]
is an isomorphism for \(1 \leq i < 2n - 1\) and a surjection for \(i = 2n - 1\).
\end{theorem}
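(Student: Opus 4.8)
The plan is to read the statement off the Sullivan minimal model of $X$. Fix a Sullivan minimal model $\mu_X\colon \smm{X}=(\wedge V,\partial)\xrightarrow{\simeq}\APL(X)$, so that $\rmH^*(\wedge V,\partial)\cong \rmH^*(X;\bbQ)$. Since $X$ is simply-connected, $V$ is concentrated in degrees $\geq 2$; and by the correspondence between the generators of a minimal model and the rational homotopy groups (\cref{thm:rational homotopy v.s. V}, together with the identification $V^i\cong\Hom_{\GVect}(\pi_i(X)\otimes\bbQ,\bbQ)^{i}$, after a routine reduction to the finite-Betti-number case by passing to a suitable Postnikov truncation), the hypothesis $\pi_i(X)\otimes\bbQ=0$ for $1<i<n$ gives $V^i=0$ for $i<n$. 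Hence $V=\bigoplus_{i\geq n}V^i$, and therefore $\wedge^{\geq 2}V$ is concentrated in degrees $\geq 2n$.

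Next I would compute $\rmH^*(\wedge V,\partial)$ in the relevant range. For $1\leq i\leq 2n-1$ we have $(\wedge V)^i=V^i$, since the word-length-one part is $V^i$ and word length $\geq 2$ begins only in degree $2n$. By minimality $\partial V\subseteq \wedge^{\geq 2}V$, so $\partial$ vanishes on $V^i$ whenever $i+1<2n$, i.e.\ for $i\leq 2n-2$, while $\partial\colon V^{2n-1}\to (\wedge V)^{2n}=V^{2n}\oplus \wedge^2(V^n)$ is the first differential that may be nonzero. Consequently $\rmH^i(\wedge V,\partial)=V^i$ for $1\leq i\leq 2n-2$, and $\rmH^{2n-1}(\wedge V,\partial)=\kernel\big(\partial|_{V^{2n-1}}\big)\subseteq V^{2n-1}$. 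Now observe that the projection $q\colon \wedge V\to \wedge V/\wedge^{\geq 2}V\cong \bbQ\oplus V$ is a CDGA morphism whose target carries the zero differential (again by minimality); by the previous sentence the induced map $\rmH^i(\wedge V,\partial)\to V^i$ is the identity for $i\leq 2n-2$ and the inclusion for $i=2n-1$. Under $\mu_X$ and the identification $V^i\cong\Hom(\pi_i(X)\otimes\bbQ,\bbQ)$, this map is precisely the $\bbQ$-linear dual of the rational Hurewicz map $h\colon \pi_i(X)\otimes\bbQ\to\rmH_i(X;\bbQ)$ (this is the content of the naturality clause of \cref{thm:rational homotopy v.s. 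V} combined with the universal-coefficients identification $\rmH_i(X;\bbQ)\cong\Hom(\rmH^i(X;\bbQ),\bbQ)$). Dualizing the two displayed conclusions then shows that $h$ is an isomorphism for $1\leq i\leq 2n-2$ and, being dual to an injection, is surjective for $i=2n-1$; the cases $i=0,1$ are immediate.

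I expect the main obstacle to be the identification in the third step: checking that the algebraic isomorphism $\rmH^*(X;\bbQ)\cong V^*$ produced by the minimal model genuinely represents $h$, rather than being some abstract isomorphism between the same groups. Making this precise requires unwinding the naturality of $\pi_*(\mu_X)$ in \cref{thm:rational homotopy v.s. V} and comparing it with the cellular definition of the Hurewicz map; an alternative that avoids this is to run the elementary fibration-by-fibration argument of Dyer and of Klaus--Kreck, applying the Serre spectral sequence to the principal fibration $K(\pi_n(X)\otimes\bbQ,n)\to X_{\bbQ}\to X_{\bbQ}^{(n-1)}$ and using that $\tilde{\rmH}_*(K(A,n);\bbQ)$ vanishes in degrees $1,\dots,n-1$ and $n+1,\dots,2n-1$, so that the first possibly-nonzero transgression lands in total degree $2n$ — which is exactly why only surjectivity survives at $i=2n-1$. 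A secondary, purely bookkeeping obstacle is the reduction from an arbitrary simply-connected $X$ to one with finite Betti numbers needed to invoke \cref{thm:rational homotopy v.s. V} directly.
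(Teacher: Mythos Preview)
The paper does not supply a proof of this theorem at all: it is stated with citations to Dyer and to Klaus--Kreck and then used as background. So there is no ``paper's own proof'' to compare against; your proposal is a proof of a result the paper merely quotes.

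On its own merits, your minimal-model argument is sound and is the standard rational-homotopy-theoretic route to this statement, distinct from the Serre spectral sequence arguments of the cited references. The degree count is correct: with $V^{<n}=0$ one has $(\wedge V)^i=V^i$ for $i\leq 2n-1$, minimality forces $\partial|_{V^i}=0$ for $i\leq 2n-2$, and the only possible nonzero differential in the range is $\partial\colon V^{2n-1}\to(\wedge^2 V)^{2n}$, giving $\rmH^i\cong V^i$ for $i\leq 2n-2$ and $\rmH^{2n-1}\hookrightarrow V^{2n-1}$. The point you flag as the main obstacle---that the projection $\rmH^*(\wedge V,\partial)\to V$ really is the $\bbQ$-dual of the Hurewicz map under the identification of \cref{thm:rational homotopy v.s. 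V}---is a genuine lemma, not a triviality, but it is a known one (it appears, for instance, in F\'elix--Halperin--Thomas as the compatibility between the linear part of a minimal model and the Hurewicz homomorphism). Once that is cited, the dualization step is clean. Your secondary concern about reducing to finite Betti numbers is also legitimate; the cleanest fix is to argue degreewise and replace $X$ by a Postnikov section $X\to P_{2n-1}X$, which has finite-type rational homotopy and the same $\pi_i\otimes\bbQ$ and $\rmH_i(-;\bbQ)$ in the range $i\leq 2n-1$.
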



\section{Persistent Sullivan Minimal Models}
\label{sec:pSMM}

In this section, we define the \emph{persistent} Sullivan minimal models of persistent topological spaces and prove some stability results for the interleaving distance (in the homotopy category of CDGAs) between persistent Sullivan minimal models. 

\subsection{Persistent Sullivan minimal models} 

Recall from \cref{sec:smm background} that the Sullivan minimal model of a topological space \(X\) is defined as a minimal Sullivan algebra \(\smm{X}\), together with a quasi-isomorphism \(\mu_X: \smm{X} \to \APL(X)\), where \(\APL(X)\) denotes the CDGA of polynomial differential forms on \(X\).  
We extend this notion to persistent spaces.

Before giving the definition, we note that the homotopy category of commutative differential graded algebras is naturally contravariant with respect to topological spaces. 
That is, a continuous map \( f: X \to Y \) induces a CDGA morphism \( \APL(f): \APL(Y) \to \APL(X) \). 
To properly capture this contravariance, we use the opposite category construction: given a category \(\catC\), its \emph{opposite category} \(\catC^{\mathrm{op}}\) has the same objects as \(\catC\), but with all morphisms reversed.

\begin{definition} \label{def:psmm}
    Let $\Xfunc:(\bbR,\leq )\to  \topo$ be a persistent path-connected space. We define the \emph{persistent Sullivan minimal model} of $\Xfunc$ to be a persistent minimal Sullivan algebra $\smm{X_\bullet}$ together with CDGA quasi-isomorphisms $\mu_{X_\bullet}:=\left\{\mu_{X_t}:\smm{X_t}\xrightarrow{\simeq} \APL(X_t)\right\}_t$ such that: 
\begin{itemize}
    \item For each $t,$ $\smm{X_t}$ together with $\mu_{X_t}$ is a Sullivan minimal model for $X_t$; and
    \item For any $t\leq s$, the following diagram commutes up to homotopy:
    \[
    \begin{tikzcd}
        \smm{X_t}
        \ar[r,"\mu_{X_t}" above, "\simeq" below]
        &
        \APL(X_t)
        \\
        \smm{X_s}
         \ar[u,"\smm{f_{t,s}}" left]
        \ar[r,"\mu_{X_s}" below, "\simeq" above]
        &
        \APL(X_s).
        \ar[u, "f_{t,s}" right]
    \end{tikzcd}
    \]
    In particular, \(\mu_{X_\bullet}\) induces a natural isomorphism between the functors  
    \[
    \mathrm{Ho} \circ \mathfrak{M} \circ \APL \circ \Xfunc  \xRightarrow[\cong]{\mu_{X_\bullet}} \mathrm{Ho} \circ \APL \circ \Xfunc :\, (\mathbb{R},\leq) \to \Ho(\CDGA)^{\op}.
    \]  
\end{itemize} 
We often omit explicit mention of \(\mu_{X_\bullet}\) and refer to $\smm{X_\bullet}$  as a persistent Sullivan minimal model for $X_\bullet$. 
\end{definition}

The existence and uniqueness (up to quasi-isomorphism in $\CDGA$) of persistent Sullivan minimal models of persistent path-connected spaces follow from \cref{prop:existence and uniqueness} and \cref{prop: S-representative}.

If the persistent topological space is given by the Vietoris-Rips filtration of a metric space $X$, then we denote the resulting persistent Sullivan minimal model as $\smm{\VR_\bullet(X)}$.

\begin{example}
Let $\bbS^1$ be the unit geodesic circle.
It is shown in \cite[Theorem 7.6]{adamaszek2017vietoris} that there are natural homotopy equivalences
 \begin{equation*}
 \VR_{ t}(\bbS^1) \simeq \begin{cases} 
 \bbS^{2l+1},&\mbox{if $\tfrac{2l}{2l+1}\pi<t\leq\tfrac{2l+2}{2l+3}\pi$ for some $l=0,1,\cdots$,}\\
 \ast, &\mbox{if $t> \pi.$}
 \end{cases}
 \end{equation*}

Combined with \cref{ex:s.m.m}, we see that a persistent Sullivan minimal model for the Vietoris-Rips filtration of $\bbS^1$ is 
 \begin{equation*}
\smm{\VR_\bullet(\bbS^1)} =\bigoplus_{l=0}^{+\infty}(\wedge u_{2l+1},0)\left( \tfrac{2l}{2l+1}\pi,\tfrac{2l+2}{2l+3}\pi\right],
 \end{equation*}
 where each $(\wedge u_{2l+1},0)\left( \tfrac{2l}{2l+1}\pi, \tfrac{2l+2}{2l+3}\pi\right]$ is an interval-like persistent Sullivan algebra (see \cref{def:interval-like persistent object}).
\end{example}

\subsection{Stability of persistent Sullivan minimal models} 
\label{sub:stability}
We prove \cref{thm:stab-PM}.

\begin{lemma}\label{lem:diagram} Let $\smm{X_1},\smm{X_2},\smm{Y_1}$ and $\smm{Y_2}$ be Sullivan minimal models of path-connected topological spaces $X_1,X_2,Y_1$ and $Y_2$, respectively. Then, a commutative diagram of topological spaces
	\begin{center}
		\begin{tikzcd}
		X_1 \ar[d, "f" left] &
		X_2 \ar[l,"i" above]
		\ar[d,"g"]
			\\
		 Y_1
			& Y_2 \ar[l,"j" below]
		\end{tikzcd}
	\end{center} 
induces a diagram of the Sullivan minimal models, which commutes in $\Ho(\CDGA)$:
	\begin{center}
		\begin{tikzcd}
		\smm{X_1}  
		\ar[r,"\smm{i}" above]
		&
		\smm{X_2}
			\\
		 \smm{Y_1} \ar[u, "\smm{f}" left]
			\ar[r,"\smm{j}" below]
			& \smm{Y_2}. 
		\ar[u,"\smm{g}" right]
		\end{tikzcd}
	\end{center}
\end{lemma}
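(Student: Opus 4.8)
The plan is to apply the functoriality of the Sullivan minimal model construction together with the contravariant $\APL$ functor, and to keep careful track of the fact that everything is happening \emph{up to homotopy}, i.e. in the homotopy category $\Ho(\CDGA)$. First I would pass from the given commutative square of spaces to a commutative square of CDGAs by applying $\APL$: since $\APL$ is a functor $\topo \to \CDGA$ (contravariant, in the sense that $\APL(f)\colon \APL(Y_1)\to\APL(X_1)$ etc.), the square
\[
\begin{tikzcd}
X_1 \ar[d,"f"] & X_2 \ar[l,"i"] \ar[d,"g"] \\
Y_1 & Y_2 \ar[l,"j"]
\end{tikzcd}
\]
yields a strictly commutative square
\[
\begin{tikzcd}
\APL(X_1) \ar[r,"\APL(i)"] & \APL(X_2) \\
\APL(Y_1) \ar[u,"\APL(f)"] \ar[r,"\APL(j)"] & \APL(Y_2) \ar[u,"\APL(g)"]
\end{tikzcd}
\]
in $\CDGA$ (strict commutativity because $f\circ i = j\circ g$ as maps of spaces, and $\APL$ is an honest functor).

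Next I would invoke \cref{rmk:SAlg functor}: the assignment $(A,\partial)\mapsto \smm{A,\partial}$ and $h\mapsto \smm{h}$ is a functor $\smm{\cdot}\colon \Ho(\CDGA)\to\Ho(\CDGA)$. Recall that $\smm{X_i}$ is by definition $\smm{\APL(X_i)}$ and $\smm{i}$ is the Sullivan representative of $\APL(i)$, and similarly for the others. Since the square of $\APL$'s commutes on the nose in $\CDGA$, it certainly commutes in $\Ho(\CDGA)$, and applying the functor $\smm{\cdot}$ to this square of morphisms in $\Ho(\CDGA)$ produces a square
\[
\begin{tikzcd}
\smm{X_1} \ar[r,"\smm{i}"] & \smm{X_2} \\
\smm{Y_1} \ar[u,"\smm{f}"] \ar[r,"\smm{j}"] & \smm{Y_2}
\end{tikzcd}
\]
that commutes in $\Ho(\CDGA)$, because functors preserve commutativity of diagrams. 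Concretely, functoriality gives $\smm{i}\circ\smm{f} = \smm{\APL(i)}\circ\smm{\APL(f)} = \smm{\APL(i)\circ\APL(f)} = \smm{\APL(f\circ i)} = \smm{\APL(j\circ g)} = \smm{\APL(j)}\circ\smm{\APL(g)} = \smm{j}\circ\smm{g}$ as morphisms in $\Ho(\CDGA)$, which is exactly the claimed commutativity.

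The only genuine subtlety — and the step I would be most careful about — is that $\smm{\cdot}$ is a functor on the \emph{homotopy} category, so the equality $\smm{i}\circ\smm{f} = \smm{j}\circ\smm{g}$ holds as an equality of homotopy classes of CDGA morphisms (equivalently, the two composites $\smm{Y_2}\to\smm{X_2}$ are homotopic in the sense of \cref{def:homotopy of CDGA morphism}), not necessarily as strictly equal maps. This is exactly what "commutes in $\Ho(\CDGA)$" means, so it is what the lemma asserts; but it is worth spelling out, since the individual maps $\smm{i},\smm{f},\smm{g},\smm{j}$ are themselves only well-defined up to homotopy (each is \emph{a} Sullivan representative, via \cref{prop: S-representative}), and one should note that the statement is independent of these choices precisely because $\smm{\cdot}$ descends to $\Ho(\CDGA)$ and because, by \cref{prop: S-representative}, the homotopy class of a Sullivan representative depends only on the homotopy class of the map it represents. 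There is no hard analytic content here; the work is entirely in assembling the two functors correctly and handling the direction of the arrows, so I would present it crisply in about the length above rather than belaboring it.
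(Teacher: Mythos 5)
Your proof is correct and follows essentially the same route as the paper: apply the contravariant functor $\APL$ to obtain a strictly commutative square in $\CDGA$, then use that Sullivan representatives assemble into a functor on $\Ho(\CDGA)$ — the paper simply unpacks this in place, building the four trapezoidal squares from \cref{prop: S-representative} and inverting the quasi-isomorphisms $\mu$ in $\Ho(\CDGA)$, which is exactly the content of \cref{rmk:SAlg functor} that you invoke. The only blemish is an order-of-composition slip at the end of your displayed chain: since $\APL(j\circ g)=\APL(g)\circ\APL(j)$, the last two terms should read $\smm{\APL(g)}\circ\smm{\APL(j)}=\smm{g}\circ\smm{j}\colon \smm{Y_1}\to\smm{X_2}$, matching the orientation of the square in the statement.
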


\begin{proof} We first apply the contravariant functor $\APL$ to the diagram of topological spaces and obtain the middle square of the following diagram:
	\begin{center}
		\begin{tikzcd}
		\textcolor{red}{\smm{X_1}}
		\ar[rrr,red, "\smm{i}" above]
		\ar[rd,blue, "\simeq" above]
		&&&
		\textcolor{red}{\smm{X_2}}
		\ar[ld,blue, "\simeq" above]
		\\
		&
		\APL(X_1) 
		\ar[r,"\APL(i)" above]
		&
		\APL(X_2)
		&
			\\
			&
		 \APL(Y_1) \ar[u, "\APL(f)" left]
			\ar[r,"\APL(j)" below]
			& \APL(Y_2)
		\ar[u,"\APL(g)" right]
		&
		\\
		\textcolor{red}{\smm{Y_1}}
		\ar[uuu,red, "\smm{f}" left]
			\ar[rrr,red,"\smm{j}" below]
			\ar[ur,blue, "\simeq" below]
		&&&
		\textcolor{red}{\smm{Y_2}}
		\ar[uuu,red,"\smm{g}" right]
		\ar[lu,blue, "\simeq" below]
		\end{tikzcd}
	\end{center}
The blue arrows are quasi-isomorphisms obtained from the existence of Sullivan minimal models, cf. \cref{prop:existence and uniqueness}. \cref{prop: S-representative} gives us the red arrows as the Sullivan representatives of the corresponding black arrows. In addition, each one of the four side squares, which are in a trapezoidal shape, commutes up to homotopy and thus commutes in the category $\Ho(\CDGA)$. Because the quasi-isomorphism blue arrows can be reversed in $\Ho(\CDGA)$, we can conclude that the outer (red) square commutes.
\end{proof}	

The following corollary follows directly from \cref{lem:diagram}:
\begin{corollary} \label{cor:di-stab}
Let $\Xfunc$ and $\Yfunc$ be persistent spaces with persistent Sullivan minimal models $\smm{\Xfunc}$ and $\smm{\Yfunc}$, respectively. Then 
\[\di^{\Ho(\CDGA)}(\smm{\Xfunc}, \smm{\Yfunc}) \leq \di^{\topo}(\Xfunc,\Yfunc).\]
\end{corollary}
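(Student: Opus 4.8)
The plan is to prove \cref{cor:di-stab} by unwinding the definition of the interleaving distance and applying \cref{lem:diagram} at each square of the interleaving diagram. Fix $\delta > \di^{\topo}(\Xfunc, \Yfunc)$; then there is a $\delta$-interleaving between $\Xfunc$ and $\Yfunc$ in $\topo$, consisting of natural transformations $\{\varphi_t : X_t \to Y_{t+\delta}\}$ and $\{\phi_t : Y_t \to X_{t+\delta}\}$ satisfying the two triangle identities and compatibility with the structure maps, as in \cref{def:delta-interleaving}. I would like to conclude that applying the functor $\smm{\cdot}$ (viewed, as in \cref{rmk:SAlg functor}, as an endofunctor of $\Ho(\CDGA)$ precomposed with $\APL$, hence a functor $\topo \to \Ho(\CDGA)^{\op}$) produces a $\delta$-interleaving between $\smm{\Xfunc}$ and $\smm{\Yfunc}$ in $\Ho(\CDGA)$.

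Concretely, first I would set $\smm{\varphi_t} := \smm{\APL(\varphi_t)}$ and $\smm{\phi_t} := \smm{\APL(\phi_t)}$ (Sullivan representatives), which are morphisms in $\CDGA$ well-defined up to homotopy, hence honest morphisms in $\Ho(\CDGA)$. Each of the commuting squares and triangles defining the $\delta$-interleaving of spaces — the two parallelogram squares relating $\varphi$, $\phi$ to the structure maps, and the two triangle identities $\phi_{t+\delta}\circ\varphi_t = f^X_{t,t+2\delta}$, $\varphi_{t+\delta}\circ\phi_t = f^Y_{t,t+2\delta}$ — is a commutative diagram of topological spaces of exactly the shape handled by \cref{lem:diagram} (a triangle being a degenerate square, with one space repeated or one map the identity). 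Applying \cref{lem:diagram} to each of these diagrams shows the corresponding diagram of Sullivan minimal models commutes in $\Ho(\CDGA)$. Assembling these, the families $\{\smm{\varphi_t}\}$ and $\{\smm{\phi_t}\}$ constitute a $\delta$-interleaving of $\smm{\Xfunc}$ and $\smm{\Yfunc}$ in $\Ho(\CDGA)$. Taking the infimum over such $\delta$ gives the claimed inequality.

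Alternatively — and this is probably the cleanest write-up — one can simply invoke \cref{thm:Bubenik}: $\smm{\cdot}\circ\APL$ is a functor from $\topo$ to $\Ho(\CDGA)^{\op}$, so $\di^{\Ho(\CDGA)^{\op}}(\smm{\APL\circ\Xfunc},\smm{\APL\circ\Yfunc}) \le \di^{\topo}(\Xfunc,\Yfunc)$, and the interleaving distance is insensitive to passing to the opposite category (reversing all structure maps and all interleaving morphisms simultaneously preserves $\delta$-interleavings), so $\di^{\Ho(\CDGA)^{\op}} = \di^{\Ho(\CDGA)}$ on the relevant objects; finally $\smm{\APL\circ\Xfunc}$ is precisely the persistent Sullivan minimal model $\smm{\Xfunc}$ by \cref{def:psmm}. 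The only genuine subtlety — and thus the main obstacle — is bookkeeping the contravariance: one must be careful that \cref{lem:diagram} is stated for a commuting square of spaces and already absorbs the $\APL$-dualization, so that the direction of $\smm{\varphi_t}$ is $\smm{Y_{t+\delta}} \to \smm{X_t}$, and one must check that this reversal is consistent across all four defining diagrams so that the result really is a $\delta$-interleaving in $\Ho(\CDGA)$ (equivalently, in $\Ho(\CDGA)^{\op}$ with the original arrow directions). Once the variance is tracked correctly, no analytic input is needed — everything reduces to \cref{lem:diagram} and the formal properties of interleavings.
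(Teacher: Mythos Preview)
Your proposal is correct and matches the paper's approach: the paper simply states that the corollary ``follows directly from \cref{lem:diagram},'' which is precisely your first argument spelled out in detail. Your alternative via \cref{thm:Bubenik} is also valid and arguably cleaner, and your attention to the contravariance bookkeeping is exactly the right point to flag.
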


\begin{lemma}\label{lem:X simeq Y} Let $\Xfunc\simeq \Yfunc$ be weakly equivalent persistent spaces (see page \pageref{para:weakly equivalent}). 
Assume that $\smm{\Xfunc}$ and $\smm{\Yfunc}$ are persistent Sullivan minimal models of $\Xfunc$ and $\Yfunc$, respectively. Then $\smm{\Xfunc}\cong \smm{\Yfunc}$.
\end{lemma}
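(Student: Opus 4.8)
The plan is to unwind the weak equivalence $\Xfunc\simeq\Yfunc$, transport it through $\APL$ and the minimal model construction, and read off an isomorphism of the resulting persistent minimal Sullivan algebras. The homotopy-theoretic content is classical; the only real work is keeping everything \emph{persistent} (i.e.\ natural in $t$).

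First I would invoke the definition of $\simeq$ for persistent spaces (page~\pageref{para:weakly equivalent}): there is a persistent space $\Wfunc$ and objectwise weak homotopy equivalences $f\colon\Wfunc\Rightarrow\Xfunc$ and $g\colon\Wfunc\Rightarrow\Yfunc$. Applying the contravariant functor $\APL$ componentwise and using the natural isomorphism $\rmH^*(\APL(Z))\cong\rmH^*(Z;\bbQ)$, each $\APL(f_t)\colon\APL(X_t)\to\APL(W_t)$ and $\APL(g_t)\colon\APL(Y_t)\to\APL(W_t)$ induces an isomorphism on cohomology, hence is a quasi-isomorphism, and these are compatible with the structure maps of the three persistent CDGAs. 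In particular $\APL\circ f$ and $\APL\circ g$ become natural isomorphisms once composed with $\Ho$.

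Next I would fix a persistent Sullivan minimal model $\smm{\Wfunc}$ of $\Wfunc$ (it exists by the discussion following \cref{def:psmm}) and pass to minimal models, either by applying the functor $\smm{\cdot}\colon\Ho(\CDGA)\to\Ho(\CDGA)$ of \cref{rmk:SAlg functor} or by choosing Sullivan representatives $\smm{f_t}\colon\smm{X_t}\to\smm{W_t}$, $\smm{g_t}\colon\smm{Y_t}\to\smm{W_t}$ as in \cref{prop: S-representative}. Two observations then finish the argument. First, a Sullivan representative of a quasi-isomorphism is again a quasi-isomorphism (it induces an isomorphism on cohomology through the quasi-isomorphisms $\mu$), and hence, being a morphism between minimal Sullivan algebras, it is in fact a strict isomorphism of CDGAs by \cref{prop:iso=q.iso}(1). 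Second, by \cref{lem:diagram} the squares formed by $\smm{f_t}$ (resp.\ $\smm{g_t}$) together with the structure maps of $\smm{\Xfunc},\smm{\Wfunc}$ (resp.\ $\smm{\Yfunc},\smm{\Wfunc}$) commute in $\Ho(\CDGA)$. Thus the families $\{\smm{f_t}\}$ and $\{\smm{g_t}\}$ assemble into isomorphisms of persistent minimal Sullivan algebras $\smm{\Xfunc}\xrightarrow{\cong}\smm{\Wfunc}\xleftarrow{\cong}\smm{\Yfunc}$, and composing gives $\smm{\Xfunc}\cong\smm{\Yfunc}$.

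The step I expect to be the main obstacle is the passage from ``pointwise'' to ``persistent.'' Since \cref{prop: S-representative} pins down each $\smm{f_t}$ only up to homotopy, the naturality squares over $(\bbR,\leq)$ commute a priori only in $\Ho(\CDGA)$, which is why the conclusion sits most naturally there. To upgrade to a genuine isomorphism of persistent CDGAs I would use \cref{prop:iso=q.iso}(1) to rigidify the pointwise comparison maps into strict isomorphisms, and then construct the comparison morphism inductively along the word-length filtration of the minimal models, exploiting that $(\bbR,\leq)$ is a poset so that no coherence beyond commuting squares is required. Executing this assembly carefully — rather than the underlying rational-homotopy input — is where the care is needed.
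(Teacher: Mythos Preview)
Your core argument is correct and matches the paper's proof exactly: unwind the zigzag $\Xfunc\Leftarrow\Wfunc\Rightarrow\Yfunc$, take Sullivan representatives of the objectwise weak equivalences (these are quasi-isomorphisms), and use \cref{lem:diagram} to get naturality in $\Ho(\CDGA)$, yielding $\smm{\Xfunc}\cong\smm{\Wfunc}\cong\smm{\Yfunc}$.

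Your final paragraph, however, is unnecessary and slightly misdirected. By \cref{def:psmm}, a persistent Sullivan minimal model is already only a functor $(\bbR,\leq)\to\Ho(\CDGA)^{\op}$ (the structure squares commute only up to homotopy), so the isomorphism $\smm{\Xfunc}\cong\smm{\Yfunc}$ asserted in the lemma is a natural isomorphism in $\Ho(\CDGA)$, not in $\CDGA$. In $\Ho(\CDGA)$ every quasi-isomorphism is invertible by construction, so there is nothing to rigidify and no inductive assembly to perform. In particular you do not need \cref{prop:iso=q.iso}(1); invoking it would impose a simply-connectedness hypothesis that the lemma does not assume. The paper's proof simply stops once \cref{lem:diagram} has been applied.
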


\begin{proof}Since $\Xfunc\simeq \Yfunc$, there exist an persistent space $\Ufunc$ and natural transformations $f:\Ufunc\Rightarrow \Xfunc$ and $g:\Ufunc\Rightarrow \Yfunc$ that are (objectwise) weak equivalences. For any $t$, $f_t$ is a weak equivalence. 
Thus, the Sullivan representative of $f_t$ is a quasi-isomorphism from the Sullivan minimal model for $X_t$ to the Sullivan minimal model for $U_t.$ By applying \cref{lem:diagram}, we decude that $\smm{\Xfunc}$ and $\smm{\Ufunc}$ are isomorphic to each other. Similarly, we have $\smm{\Yfunc}\cong \smm{\Ufunc}$.
\end{proof}	

\begin{proof}[Proof of \cref{thm:stab-PM}] Let $\delta>\dhi(\Xfunc,\Yfunc)$. Then there exist persistent spaces $\Xfunc'\simeq \Xfunc$ and $\Yfunc'\simeq \Yfunc$ such that $\Xfunc'$ and $\Yfunc'$ are $\delta$-interleaved. By \cref{cor:di-stab} and \cref{lem:X simeq Y}, we have
\begin{align*}
    &\di^{\Ho(\CDGA)}(\smm{\Xfunc}, \smm{\Yfunc})\\
    \leq\,& \di^{\Ho(\CDGA)}(\smm{\Xfunc}, \smm{\Xfunc'})+\di^{\Ho(\CDGA)}(\smm{\Xfunc'}, \smm{\Yfunc'})+\di^{\Ho(\CDGA)}(\smm{\Yfunc'}, \smm{\Yfunc})\\
    \leq\,& 0+\di(\Xfunc',\Yfunc')+0=\delta.
\end{align*}
Since this holds for any $\delta > \dhi(\Xfunc, \Yfunc)$, it follows that
\[
    \di^{\Ho(\CDGA)}(\smm{\Xfunc}, \smm{\Yfunc}) \leq \dhi(\Xfunc, \Yfunc).
\]

For the case of Vietoris-Rips filtrations of metric spaces, the result follows from the Gromov-Hausdorff stability of the homotopy interleaving distance between Vietoris-Rips filtrations (see \cref{thm:stability-HI}).
\end{proof}

\subsection{Lower bounds for \texorpdfstring{$\di^{\Ho(\CDGA)}$}{dicdga} between persistent Sullivan minimal models}

In \cref{sec:linear parts}, we recall from \cite[\textsection 12(b)]{felix2001rational} the functor $\rmQ:\CDGA\to \GVect$, which extracts the linear part (or indecomposables) of a CDGA. 
Moreover, we recall that if two minimal Sullivan algebras \((\wedge V, \partial)\) and \((\wedge W, \partial)\) are quasi-isomorphic, then:
\begin{enumerate}
    \item \(V \cong W\) as graded vector spaces (see \cref{cor:V cong W}), and
    \item \(\rmH^*(\wedge V, \partial) \cong \rmH^*(\wedge W, \partial)\) as graded vector spaces.
\end{enumerate}
Inspired by these facts, we establish two lower bounds for the interleaving distance (in the homotopy category of CDGAs) between persistent minimal Sullivan algebras:

\lowerBound*

We denote by $\pcHVR[\bullet]{*}{X}$ the persistent cohomology of the Vietoris-Rips filtration of a metric space $X$, and denote by $\Vfunc^X$ the underlying graded vector space of a persistent Sullivan minimal model of  $\VR_\bullet(X)$. 
The following results follow immediately from \cref{thm:lower-bound}.

\begin{corollary} \label{cor:lower bound}
    Let $X$ and $Y$ be totally bounded metric spaces. Then,
    \begin{equation}\label{eq:di-cohomology}
        \di^{\GVect} \left( \pcHVR[\bullet]{*}{X},\pcHVR[\bullet]{*}{Y}\right)\leq \di^{\Ho(\CDGA)}(\smm{\VR_\bullet(X)}, \smm{\VR_\bullet(Y)}).
    \end{equation}
    If $X$ and $Y$ are simply-connected, then 
    \begin{equation}\label{eq:di-V-W} 
        \di^{\GVect}\left( \pi_*(\VR_\bullet(X))\otimes \bbQ, \pi_*(\VR_\bullet(Y))\otimes \bbQ \right) \leq \di^{\Ho(\CDGA)}(\smm{\VR_\bullet(X)}, \smm{\VR_\bullet(Y)}).
    \end{equation}
\end{corollary}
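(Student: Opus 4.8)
This is a corollary of \cref{thm:lower-bound}, so the plan is to match the persistence modules appearing on the two sides of each inequality and then invoke the fact, recalled above (\cite[Corollary 3.5]{Bubenik2014}), that $\di$ depends only on the isomorphism class of a persistence module. Put $\Afunc:=\smm{\VR_\bullet(X)}$ and $\Bfunc:=\smm{\VR_\bullet(Y)}$, viewed both as persistent CDGAs and---since their values are minimal Sullivan algebras---as persistent minimal Sullivan algebras $\wedge\Vfunc^X$, $\wedge\Vfunc^Y$ (simply-connected ones, for the second inequality). All functors below are applied objectwise; because $\APL$ and $\smm{\cdot}$ are contravariant these yield persistence modules over the reversed poset, which leaves every interleaving distance unchanged.

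For \eqref{eq:di-cohomology}, I would use that the quasi-isomorphisms $\mu_{X_t}\colon\smm{\VR_t(X)}\xrightarrow{\simeq}\APL(\VR_t(X))$ of \cref{def:psmm} induce isomorphisms $\rmH^*(\smm{\VR_t(X)})\xrightarrow{\cong}\rmH^*(\APL(\VR_t(X)))=\pcHVR[t]{*}{X}$ for each $t$, and that the structure squares of \cref{def:psmm}, which commute only up to homotopy, become strictly commutative after applying $\rmH^*$ since homology is homotopy invariant. Hence $\{\rmH^*(\mu_{X_t})\}_t$ is a natural isomorphism $\rmH\circ\Afunc\cong\pcHVR[\bullet]{*}{X}$, and similarly for $Y$, so $\di^{\GVect}(\pcHVR[\bullet]{*}{X},\pcHVR[\bullet]{*}{Y})=\di^{\GVect}(\rmH\circ\Afunc,\rmH\circ\Bfunc)$, and \eqref{eq:di-cohomology} is exactly \eqref{eq:stab 1} of \cref{thm:lower-bound}.

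For \eqref{eq:di-V-W}, assuming simple-connectedness propagates to every $\VR_t(X)$ and $\VR_t(Y)$ (this is the only role of the hypothesis), the algebras $\wedge\Vfunc^X$, $\wedge\Vfunc^Y$ are simply-connected persistent minimal Sullivan algebras, so \cref{thm:rational homotopy v.s. V} applies at each $t$; by its naturality clause together with \cref{def:psmm}, the maps $\pi_*(\mu_{X_t})$ assemble into a natural isomorphism $\pi_*(\VR_\bullet(X))\otimes\bbQ\cong\Hom_{\GVect}(\Vfunc^X,\bbQ)$ (and likewise for $Y$). Since linear duality is a contravariant equivalence on graded vector spaces of finite type it preserves $\di$, giving $\di^{\GVect}(\pi_*(\VR_\bullet(X))\otimes\bbQ,\pi_*(\VR_\bullet(Y))\otimes\bbQ)=\di^{\GVect}(\Vfunc^X,\Vfunc^Y)$, and \eqref{eq:di-V-W} now follows from \eqref{eq:stab 2} of \cref{thm:lower-bound}.

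The content beyond \cref{thm:lower-bound} is confined to the two \emph{naturality} assertions---that the objectwise isomorphisms of \cref{thm:rational homotopy v.s. V} and the cohomology isomorphisms induced by $\mu_{X_\bullet}$ respect the structure maps of the persistence modules---which I would verify by unwinding \cref{def:psmm} (whose squares are only homotopy-commutative) and using that $\rmH^*$ and the comparison of \cref{thm:rational homotopy v.s. V} are homotopy invariant; this is the step most in need of care. A secondary point I would flag is the propagation of simple-connectedness to the Vietoris--Rips complexes required for \eqref{eq:di-V-W}: if it is not available in the stated generality, \eqref{eq:di-V-W} should be read with $\smm{\VR_\bullet(X)}$ denoting the persistent Sullivan minimal model of a simply-connected replacement of $\VR_\bullet(X)$, or under an additional hypothesis on $X$ and $Y$.
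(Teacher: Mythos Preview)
Your proposal is correct and follows essentially the same route as the paper: identify the left-hand sides with $\rmH\circ\Afunc$ (resp.\ with $\Vfunc^X,\Vfunc^Y$ via \cref{thm:rational homotopy v.s. V} and duality), then invoke the two parts of \cref{thm:lower-bound}. The one concern you flag---propagation of simple-connectedness from $X$ to every $\VR_t(X)$---is in fact settled in the paper by citation (to Wilkins and to M\'emoli--Zhou), so no extra hypothesis or simply-connected replacement is needed.
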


Moreover, the left-hand side of \cref{eq:di-cohomology} is no greater than that of \cref{eq:di-V-W}, as shown in \cite{memoli2024persistent}. See also \cite[Example~6.4]{memoli2024persistent} for an instance in which the (co)homological lower bound is strictly smaller than the rational homotopy lower bound.

\subsubsection{Linear parts of CDGA}\label{sec:linear parts} 

Let \( A^+ := \bigoplus_{n > 0} A^n \) denote the positive-degree part of a CDGA \( (A, \partial) \). The \emph{linear part} (or \emph{indecomposables}) of \( A \) is defined as
\[
\rmQ(A) := A^+ / (A^+ \cdot A^+).
\]
For \( a \in A^+ \), we denote by \( \rmQ(a) \) the image of \( a \) under the canonical projection \( A^+ \twoheadrightarrow \rmQ(A) \). 

Since \( \partial(A^+ \cdot A^+) \subset A^+ \cdot A^+ \) by the Leibniz rule, the differential on \( A \) induces a well-defined linear map
\[
\rmQ(\partial) : \rmQ(A) \to \rmQ(A), \quad \rmQ(a) \mapsto \rmQ(\partial a),
\]
In addition, \( \rmQ(\partial)^2 = \rmQ(\partial^2) = 0.\)
This gives \( (\rmQ(A), \rmQ(\partial)) \) the structure of a differential graded vector space.
More generally, a \emph{differential graded vector space} is a graded vector space \(V = \bigoplus_n V^n\) equipped with a linear differential \(d: V \to V\) of degree \(+1\) such that \(d^2 = 0\). 
We write \(\dgvs\) for the category of differential graded vector spaces.

Given a morphism \( \varphi : (A, \partial) \to (B, \partial) \) of CDGAs, define the induced map on indecomposables by
\[
\rmQ(\varphi) : \rmQ(A) \to \rmQ(B), \quad \rmQ(a) \mapsto \rmQ(\varphi(a)).
\]
This is well-defined since \( \varphi(A^+ \cdot A^+) \subset B^+ \cdot B^+ \), and satisfies
\[
\rmQ(\partial) \circ \rmQ(\varphi) = \rmQ(\varphi) \circ \rmQ(\partial),
\]
so \( \rmQ(\varphi) \) is a morphism of differential graded vector spaces.

\begin{lemma} \label{fact:Q is functor}
The assignment \( \rmQ: \CDGA \to \dgvs \), which maps each CDGA to its linear part and each morphism to its induced map on linear parts, defines a functor.
\end{lemma}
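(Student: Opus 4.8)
The plan is to verify the two functor axioms directly from the definitions of $\rmQ$ on objects and morphisms, since all the hard work—checking that $\rmQ(\varphi)$ is well-defined and commutes with the induced differentials—has already been carried out in the paragraph preceding the statement. Thus the proof reduces to two short computations.

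First I would check preservation of identities. Let $(A,\partial)$ be a CDGA. The identity morphism $\id_A$ satisfies $\rmQ(\id_A)(\rmQ(a)) = \rmQ(\id_A(a)) = \rmQ(a)$ for every $a \in A^+$, so $\rmQ(\id_A) = \id_{\rmQ(A)}$, which is exactly the identity morphism of the object $(\rmQ(A),\rmQ(\partial))$ in $\dgvs$.

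Second I would check preservation of composition. Given CDGA morphisms $\varphi : (A,\partial) \to (B,\partial)$ and $\psi : (B,\partial) \to (C,\partial)$, for any $a \in A^+$ we compute
\[
\rmQ(\psi \circ \varphi)(\rmQ(a)) = \rmQ\big((\psi\circ\varphi)(a)\big) = \rmQ\big(\psi(\varphi(a))\big) = \rmQ(\psi)\big(\rmQ(\varphi(a))\big) = \big(\rmQ(\psi)\circ\rmQ(\varphi)\big)(\rmQ(a)).
\]
Since the classes $\rmQ(a)$ span $\rmQ(A)$, this gives $\rmQ(\psi\circ\varphi) = \rmQ(\psi)\circ\rmQ(\varphi)$. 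Both sides are already known to be morphisms of differential graded vector spaces by the discussion above, so the equality is an equality in $\dgvs$. Together with the preceding paragraph's verification that $\rmQ$ sends objects to objects and morphisms to morphisms of $\dgvs$, this establishes that $\rmQ$ is a functor.

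There is no real obstacle here; the only point requiring minor care is that every claim must be checked on representatives $a \in A^+$ and then seen to descend to the quotient $\rmQ(A) = A^+/(A^+\cdot A^+)$, which is automatic because each map in sight ($\varphi$, $\psi$, $\partial$) preserves the decomposables $A^+\cdot A^+$—a fact already recorded before the statement. One could also phrase the whole argument more abstractly by noting that $\rmQ$ is the composite of the functor $A \mapsto A^+$ with the functor taking the cokernel of the multiplication map, but the direct elementwise check is shorter and self-contained.
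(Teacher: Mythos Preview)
Your proposal is correct and essentially identical to the paper's own proof: both verify preservation of identities and composition by the same direct elementwise computations on representatives $a\in A^+$, relying on the well-definedness of $\rmQ(\varphi)$ established in the preceding paragraph. The only cosmetic difference is that the paper re-notes the inclusion $(\psi\circ\varphi)(A^+\cdot A^+)\subset C^+\cdot C^+$ inside the proof, whereas you (reasonably) cite this as already done.
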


\begin{proof}
    We have seen that \((\rmQ(A), \rmQ(\partial))\) is an object in \(\dgvs\). It remains to verify functoriality.
    
    For the identity morphism \(\id_A: A \to A\), we have \(\rmQ(\id_A) = \id_{\rmQ(A)}\) by definition.
    For a composition of CDGA morphisms \((A, \partial) \xrightarrow{\varphi} (B, \partial) \xrightarrow{\psi} (C, \partial)\), we note that \(\varphi(A^+ \cdot A^+) \subset B^+ \cdot B^+\) and \(\psi(B^+ \cdot B^+) \subset C^+ \cdot C^+\), so \((\psi \circ \varphi)(A^+ \cdot A^+) \subset C^+ \cdot C^+\), ensuring that \(\rmQ(\psi \circ \varphi)\) is well-defined. For any \(a \in A^+\), we have
    \[
    \rmQ(\psi \circ \varphi)(a) = \rmQ(\psi(\varphi(a))) = \rmQ(\psi)(\rmQ(\varphi)(a)),
    \]
    hence \(\rmQ(\psi \circ \varphi) = \rmQ(\psi) \circ \rmQ(\varphi)\).
    
    Therefore, \(\rmQ\) defines a functor from \(\CDGA\) to \(\dgvs\).
\end{proof}

\begin{lemma} \label{lemma:Q-wedge-natural-iso}
Let \(V\) be a graded vector space concentreated in positive degrees.
The inclusion \(V \hookrightarrow \wedge V\) induces a canonical isomorphism of graded vector spaces
\(
\eta_V : V \xrightarrow{\cong} \rmQ(\wedge V),
\)
sending each \(v \in V\) to its class in the linear part \(\rmQ(\wedge V)\). 
These isomorphisms induces a natural isomorphism of functors
\[
\eta: \mathrm{id}_{\GVect} \Rightarrow \rmQ \circ \wedge(-), \quad \text{ with }V\mapsto \eta_V,\, f \mapsto \rmQ(\wedge f),
\]
for any $V\in \GVect$ and any morphism \(f: V \to W\) in \(\GVect\).
\end{lemma}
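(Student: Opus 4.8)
\textbf{Proof plan for \cref{lemma:Q-wedge-natural-iso}.}

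The plan is to establish the pointwise isomorphism first and then upgrade it to a natural isomorphism of functors. For a fixed graded vector space $V$ concentrated in positive degrees, recall that $\wedge V = \bigoplus_{\ell \geq 0} \wedge^\ell V$, with $\wedge^0 V = \bbQ$ and $\wedge^1 V = V$. Hence the positive-degree part is $(\wedge V)^+ = \bigoplus_{\ell \geq 1} \wedge^\ell V$ (using that $V$ is concentrated in positive degrees, so every element of word-length $\geq 1$ has positive degree), and the decomposable part is $(\wedge V)^+ \cdot (\wedge V)^+ = \bigoplus_{\ell \geq 2} \wedge^\ell V$. Therefore the quotient $\rmQ(\wedge V) = (\wedge V)^+/\big((\wedge V)^+\cdot(\wedge V)^+\big)$ is canonically identified with $\wedge^1 V = V$, and the composite $V = \wedge^1 V \hookrightarrow (\wedge V)^+ \twoheadrightarrow \rmQ(\wedge V)$ is exactly this identification. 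I would record this composite as $\eta_V$ and observe it is a degree-zero linear isomorphism because the projection restricted to the word-length-one summand is a bijection onto $\rmQ(\wedge V)$.

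Next I would verify naturality. Given a morphism $f \colon V \to W$ in $\GVect$, the functor $\wedge(-)$ sends it to the CDGA morphism $\wedge f \colon \wedge V \to \wedge W$ determined by $\wedge f(v_1 \wedge \cdots \wedge v_\ell) = f(v_1)\wedge \cdots \wedge f(v_\ell)$, and then $\rmQ$ (a functor by \cref{fact:Q is functor}) sends this to $\rmQ(\wedge f)\colon \rmQ(\wedge V) \to \rmQ(\wedge W)$. Under the identifications $\eta_V, \eta_W$, an element $v \in V \cong \wedge^1 V$ is sent by $\wedge f$ to $f(v) \in \wedge^1 W$, and the projection to indecomposables commutes with this because $\wedge f$ preserves word-length. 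Thus $\rmQ(\wedge f) \circ \eta_V = \eta_W \circ f$, i.e., the naturality square
\[
\begin{tikzcd}
V \ar[r, "\eta_V"] \ar[d, "f" left] & \rmQ(\wedge V) \ar[d, "\rmQ(\wedge f)"] \\
W \ar[r, "\eta_W" below] & \rmQ(\wedge W)
\end{tikzcd}
\]
commutes. Since each $\eta_V$ is an isomorphism in $\GVect$, the collection $\eta = \{\eta_V\}$ is a natural isomorphism $\mathrm{id}_{\GVect} \Rightarrow \rmQ \circ \wedge(-)$.

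There is essentially no hard step here; the only point requiring care is the bookkeeping of gradings — one must use the hypothesis that $V$ is concentrated in \emph{positive} degrees to ensure that $(\wedge V)^+$ captures exactly $\bigoplus_{\ell\geq 1}\wedge^\ell V$ and that $\wedge^0 V = \bbQ$ contributes nothing to the positive part. (If $V$ had a degree-zero component, elements of word-length one in that component would be excluded from $(\wedge V)^+$, breaking the identification.) After that, functoriality of $\rmQ$ and the word-length-preserving property of $\wedge f$ make naturality automatic. I would keep the write-up short, emphasizing the word-length decomposition and the degree hypothesis.
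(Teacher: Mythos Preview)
Your proposal is correct and follows essentially the same approach as the paper: both establish the pointwise isomorphism $\eta_V$ via the identification of $\rmQ(\wedge V)$ with $V$ and then verify the naturality square using the word-length-preserving property of $\wedge f$. If anything, your explicit use of the word-length decomposition $(\wedge V)^+ = \bigoplus_{\ell\geq 1}\wedge^\ell V$ and $(\wedge V)^+\cdot(\wedge V)^+ = \bigoplus_{\ell\geq 2}\wedge^\ell V$ is a bit cleaner than the paper's injectivity/surjectivity argument, and your remark on the necessity of the positive-degree hypothesis is a nice addition.
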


\begin{proof}
The map \(\eta_V\) is injective because for any distinct $v,v'\in V$, the difference $v-v'$ cannot be expressed as a product of positive-degree elements. 
It is surjective because every indecomposable element of \(\wedge V\) is represented by a generator. 

To verify naturality, let \(f: V \to W\) be a morphism of graded vector spaces. The induced CDGA map \(\wedge f: \wedge V \to \wedge W\) makes the following diagram commute:
\[
\begin{tikzcd}
V \arrow[r, "\eta_V"] \arrow[d, "f"'] & \rmQ(\wedge V) \arrow[d, "\rmQ(\wedge f)"] \\
W \arrow[r, "\eta_W"] & \rmQ(\wedge W)
\end{tikzcd}
\]
This shows that the family \(\{\eta_V\}\) defines a natural transformation \(\eta: \mathrm{id} \Rightarrow \rmQ \circ \wedge(-)\), and each \(\eta_V\) is an isomorphism.
\end{proof}


Recall from \cref{def:sullivan alg} that a Sullivan algebra \((\wedge V, \partial)\) is minimal if and only if \(\partial(V) \subset \wedge^{\geq 2}V\), or equivalently, \(\rmQ(\partial) = 0\). This immediately implies Item~\ref{prop:linear part-minimal} below. For Items~\ref{prop:linear part-homotopy} and~\ref{prop:linear part-quasi}, see \cite[Propositions 12.8 and 14.13]{felix2001rational}.

\begin{proposition} \label{prop:linear part}
Let \((\wedge V, \partial)\) and \((\wedge W, \partial)\) be Sullivan algebras. Then:
\begin{enumerate}
    \item \label{prop:linear part-minimal}
    \((\wedge V, \partial)\) is minimal if and only if \(\rmQ(\partial) = 0\).

    \item \label{prop:linear part-homotopy}
    If \(\varphi_0 \sim \varphi_1 : (\wedge V, \partial) \to (\wedge W, \partial)\) are homotopic CDGA morphisms between minimal Sullivan algebras and \(\rmH^1(\wedge V, \partial) = 0\), then \(\rmQ(\varphi_0) = \rmQ(\varphi_1)\).

    \item \label{prop:linear part-quasi}
    A morphism \(\varphi : (\wedge V, \partial) \to (\wedge W, \partial)\) is a quasi-isomorphism if and only if \(\rmH^*(\rmQ(\varphi))\) is an isomorphism.
\end{enumerate}
\end{proposition}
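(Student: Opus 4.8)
The three items are logically independent, so I would establish them in turn.

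\textbf{Item \ref{prop:linear part-minimal}.} The plan is to unwind the definitions. Under the canonical identification \(\eta_V : V \xrightarrow{\cong} \rmQ(\wedge V)\) of \cref{lemma:Q-wedge-natural-iso}, the induced map \(\rmQ(\partial)\) corresponds to the composite \(V \hookrightarrow \wedge V \xrightarrow{\partial} \wedge V \twoheadrightarrow \rmQ(\wedge V) \cong V\), i.e. to the linear part of \(\partial\); hence \(\rmQ(\partial)=0\) is exactly the condition \(\partial(V)\subseteq \wedge^{\geq 2}V\). To match this with minimality, \(\im(\partial)\subseteq \wedge^{+}V\cdot\wedge^{+}V=\wedge^{\geq 2}V\), I would invoke the graded Leibniz rule: on a monomial \(v_1\wedge\cdots\wedge v_\ell\) the differential is a signed sum of terms \(v_1\wedge\cdots\wedge\partial v_i\wedge\cdots\wedge v_\ell\), and once each \(\partial v_i\in\wedge^{\geq 2}V\) every such term has word-length \(\geq 2\). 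The reverse implication is trivial since \(V\subseteq\wedge V\). I expect this to be routine.

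\textbf{Item \ref{prop:linear part-homotopy}.} First I would replace the homotopy \(\Phi:\wedge V\to\wedge(t,\partial t)\otimes\wedge W\) of \cref{def:homotopy of CDGA morphism} by a \emph{derivation homotopy}: since \(\wedge V\) is a Sullivan algebra, there is a degree \(-1\) linear map \(h:\wedge V\to\wedge W\) that is a \((\varphi_0,\varphi_1)\)-derivation and satisfies \(\varphi_1-\varphi_0=\partial\circ h+h\circ\partial\). Granting this, I would compute the linear part of \(\varphi_1(v)-\varphi_0(v)\) for \(v\in V\) by projecting onto \(W\cong\rmQ(\wedge W)\). The summand \(\partial h(v)\) lies in \(\im(\partial)\subseteq\wedge^{\geq 2}W\) by minimality of \(\wedge W\), so it is decomposable. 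For \(h(\partial v)\), minimality of \(\wedge V\) gives \(\partial v\in\wedge^{\geq 2}V\); expanding \(h\) by the derivation rule on each monomial \(v_{i_1}\cdots v_{i_m}\) (\(m\geq 2\)) produces a product of \(m\) factors each in \(\wedge^{+}W\), provided every generator present has degree \(\geq 2\) so that \(h(v_{i_l})\) has degree \(\geq 1\). This is exactly where \(\rmH^1(\wedge V,\partial)=0\) enters: for a minimal Sullivan algebra it forces \(V^1=0\). Indeed \((\wedge V^1,\partial)\) is a sub-CDGA (as \(\partial V^1\subseteq\wedge^2 V^1\)); the filtration of \cref{def:sullivan alg} restricts to \(V^1\), exhibiting it as the Chevalley--Eilenberg complex of a nilpotent Lie algebra dual to \(V^1\), whose \(\rmH^1\) is the dual of the abelianization and is nonzero whenever \(V^1\neq 0\); since the degree-\(1\) cohomology only involves \(\wedge V^1\) one gets \(V^1=0\). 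With \(V^1=0\) both summands are decomposable, so \(\rmQ(\varphi_0)=\rmQ(\varphi_1)\). The main obstacle here is supplying the derivation homotopy \(h\) and verifying it is genuinely a \((\varphi_0,\varphi_1)\)-derivation, together with the deduction \(\rmH^1=0\Rightarrow V^1=0\); the degree-one generators are precisely what the hypothesis excludes.

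\textbf{Item \ref{prop:linear part-quasi}.} I would filter by word-length, \(F^p:=\wedge^{\geq p}V\). Each generator has positive degree, so only finitely many \(\wedge^p V\) meet a fixed total degree and \(F^\bullet\) is bounded in each degree; moreover \(\partial(F^p)\subseteq F^p\), since \(\partial v\) has positive degree (hence no word-length-\(0\) part) and \(\partial\) therefore never lowers word-length. The associated spectral sequence converges, with \(E_0\) equal to \(\wedge V\) carrying the derivation extension of the linear part \(\rmQ(\partial)\); by the char-zero Künneth isomorphism (the free graded-commutative algebra functor commutes with passage to homology) one gets \(E_1\cong\wedge\,\rmH(\rmQ(\wedge V),\rmQ(\partial))\). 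A morphism \(\varphi\) preserves \(F^\bullet\) (it sends \(\wedge^{+}V\) into \(\wedge^{+}W\)), induces the free-algebra map on \(E_0\), and hence induces \(\wedge\,\rmH(\rmQ\varphi)\) on \(E_1\); since a free-algebra map \(\wedge f\) restricts to \(f\) in word-length \(1\) and preserves word-length, it is an isomorphism iff \(f\) is, so the map on \(E_1\) is an isomorphism iff \(\rmH(\rmQ\varphi)\) is. The implication ``\(\rmH(\rmQ\varphi)\) iso \(\Rightarrow\varphi\) quasi-isomorphism'' then follows from the comparison theorem for convergent spectral sequences. For the converse I would argue by homotopy-invariance: Sullivan algebras are cofibrant, so by Ken Brown's lemma it suffices to check that \(\rmQ\) sends acyclic cofibrations between cofibrant objects to quasi-isomorphisms, and on an elementary acyclic extension \(\wedge V\hookrightarrow\wedge V\otimes\wedge(U\oplus\partial U)\) the functor \(\rmQ\) merely adjoins the acyclic complex \((U\oplus\partial U,\rmQ(\partial))\). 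I expect this converse to be the main obstacle, since the naive spectral-sequence comparison only transports isomorphisms from \(E_1\) to the abutment, not back.
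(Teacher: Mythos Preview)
The paper does not actually prove this proposition: it dispatches Item~\ref{prop:linear part-minimal} in one line (exactly your observation that minimality means $\partial(V)\subset\wedge^{\geq 2}V$, i.e.\ $\rmQ(\partial)=0$) and for Items~\ref{prop:linear part-homotopy} and~\ref{prop:linear part-quasi} simply cites \cite[Propositions~12.8 and~14.13]{felix2001rational}. So your proposal goes well beyond the paper, and the meaningful comparison is with the cited arguments.

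For Item~\ref{prop:linear part-quasi}, your word-length spectral sequence is precisely the standard argument (and is how \cite{felix2001rational} proceeds) for the implication ``$\rmH(\rmQ\varphi)$ iso $\Rightarrow$ $\varphi$ quasi-iso''. Your Ken Brown approach to the converse is a pleasant model-categorical alternative; to close it you should note that $\rmQ(A\otimes B)\cong\rmQ(A)\oplus\rmQ(B)$ for connected CDGAs, so that $\rmQ$ sends not only elementary acyclic extensions but all transfinite compositions of pushouts of the generating acyclic cofibrations (and their retracts) to quasi-isomorphisms. This buys you a conceptually clean proof at the cost of invoking the model structure, whereas \cite{felix2001rational} stays hands-on with the filtration.

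For Item~\ref{prop:linear part-homotopy}, your deduction $\rmH^1=0\Rightarrow V^1=0$ is correct but can be done more directly than via Chevalley--Eilenberg: $(\wedge V)^1=V^1$, the image of $\partial$ from degree~$0$ is zero, and the Sullivan filtration forces $\partial|_{V^1}$ to kill the first nonzero $V^1\cap V(k)$ (since $\partial(V^1\cap V(k))\subset\wedge^2(V^1\cap V(k-1))$). The one point to watch is your derivation homotopy $h$. The reference does \emph{not} produce a global $(\varphi_0,\varphi_1)$-derivation on $\wedge V$; it integrates the homotopy $\Phi$ to obtain directly a degree~$-1$ linear map $V\to\rmQ(\wedge W)$ that is a chain homotopy between $\rmQ\varphi_0$ and $\rmQ\varphi_1$, and the $\rmH^1=0$ hypothesis enters in controlling the $h(\partial v)$ term of that computation. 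Your $(\varphi_0,\varphi_1)$-derivation \emph{can} be built inductively up the Sullivan filtration of $V$ (at stage $k$ the obstruction $\varphi_1(v)-\varphi_0(v)-h(\partial v)$ is a cocycle, and the given homotopy $\Phi$ shows it is a coboundary), so your route is viable; but you are right to flag this as the crux, and you should not expect it to fall out of the definition of homotopy for free.
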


We deduce the following corollary from the above proposition.

\begin{corollary}\label{cor:V cong W}
If two minimal Sullivan algebras \((\wedge V, \partial_V)\) and \((\wedge W, \partial_W)\) are quasi-isomorphic, then \(V \cong W\) as graded vector spaces.
\end{corollary}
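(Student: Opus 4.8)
The plan is to reduce everything to the differential graded vector space of indecomposables. Suppose $(\wedge V,\partial_V)$ and $(\wedge W,\partial_W)$ are minimal Sullivan algebras that are quasi-isomorphic. By definition of weak equivalence this means there is a finite zigzag of quasi-isomorphisms connecting them; but in fact it suffices to treat a single quasi-isomorphism $\varphi:(\wedge V,\partial_V)\to(\wedge W,\partial_W)$, since the general case follows by composing the induced isomorphisms along the zigzag (each intermediate CDGA can be replaced by its Sullivan minimal model, which by \cref{prop:iso=q.iso}\ refines the zigzag to one between minimal Sullivan algebras, and one then composes). So assume $\varphi$ is a single quasi-isomorphism.

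First I would apply \cref{fact:Q is functor} to obtain the induced map $\rmQ(\varphi):(\rmQ(\wedge V),\rmQ(\partial_V))\to(\rmQ(\wedge W),\rmQ(\partial_W))$ in $\dgvs$. Next, by \cref{prop:linear part}\ref{prop:linear part-minimal}, minimality of $(\wedge V,\partial_V)$ and $(\wedge W,\partial_W)$ gives $\rmQ(\partial_V)=0$ and $\rmQ(\partial_W)=0$. Hence the cohomology of each differential graded vector space $(\rmQ(\wedge V),\rmQ(\partial_V))$ and $(\rmQ(\wedge W),\rmQ(\partial_W))$ with zero differential is just the underlying graded vector space itself: $\rmH^*(\rmQ(\wedge V),\rmQ(\partial_V))=\rmQ(\wedge V)$ and similarly for $W$. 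Then \cref{prop:linear part}\ref{prop:linear part-quasi} tells us that, because $\varphi$ is a quasi-isomorphism, $\rmH^*(\rmQ(\varphi))$ is an isomorphism of graded vector spaces; combining with the previous sentence, $\rmQ(\varphi):\rmQ(\wedge V)\to\rmQ(\wedge W)$ is itself an isomorphism of graded vector spaces.

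Finally I would invoke \cref{lemma:Q-wedge-natural-iso}: the canonical maps $\eta_V:V\xrightarrow{\cong}\rmQ(\wedge V)$ and $\eta_W:W\xrightarrow{\cong}\rmQ(\wedge W)$ are isomorphisms of graded vector spaces (here we use that $V$ and $W$ are concentrated in positive degrees, which holds for Sullivan algebras by \cref{def:sullivan alg}). Composing, $\eta_W^{-1}\circ\rmQ(\varphi)\circ\eta_V:V\to W$ is an isomorphism of graded vector spaces, which is the claim. I do not expect a genuine obstacle here — the only point requiring a little care is handling the zigzag rather than a single quasi-isomorphism, and reconciling the two notions of "quasi-isomorphic" versus "weakly equivalent" used in the paper; this is dispatched by the uniqueness of minimal models (\cref{prop:iso=q.iso}) together with functoriality of $\smm{\cdot}$ from \cref{rmk:SAlg functor}, which lets one straighten any zigzag of quasi-isomorphisms between minimal Sullivan algebras into a composable chain of genuine isomorphisms on indecomposables.
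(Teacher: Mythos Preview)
Your proposal is correct and follows essentially the same route as the paper: use minimality to kill $\rmQ(\partial)$, invoke \cref{prop:linear part}\ref{prop:linear part-quasi} to see that $\rmQ(\varphi)$ is an isomorphism, and then conjugate by the natural isomorphisms $\eta_V,\eta_W$ of \cref{lemma:Q-wedge-natural-iso} to obtain $\eta_W^{-1}\circ\rmQ(\varphi)\circ\eta_V:V\xrightarrow{\cong}W$. Your extra paragraph on zigzags is a harmless elaboration---in the paper's terminology ``quasi-isomorphic'' already means via a single quasi-isomorphism, so this reduction is not needed.
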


\begin{proof}
Since both algebras are minimal, \cref{prop:linear part} (\ref{prop:linear part-minimal}) implies that \(\rmQ(\partial_V) = \rmQ(\partial_W) = 0\), so the linear part carry zero differential. 
In addition, \cref{lemma:Q-wedge-natural-iso} implies 
\(
(\rmQ(\wedge V), 0) \cong V \quad \text{and} \quad (\rmQ(\wedge W), 0) \cong W
\)
as graded vector spaces, via the canonical projections \(\eta_V\) and \(\eta_W\).

Now let \(\varphi : (\wedge V, \partial_V) \to (\wedge W, \partial_W)\) be a quasi-isomorphism. By \cref{prop:linear part} (\ref{prop:linear part-quasi}), the induced map \(\rmH^*(\rmQ(\varphi))\) is an isomorphism. Since the differential on both sides is zero, this implies that \(\rmQ(\varphi)\) itself is an isomorphism of graded vector spaces. 
In summary, we have the commutative diagram below:
\begin{center}
    \begin{tikzcd}[column sep=large]
        \rmH^*(\rmQ(\wedge V), 0) 
        \ar[r, "\rmH^*(\rmQ(\varphi))", "\cong" below]
        & 
        \rmH^*(\rmQ(\wedge W), 0)\\
        \rmQ(\wedge V)
        \ar[r, "\rmQ(\varphi)", "\cong" below]
        \ar[u, "\cong" left]
        & \rmQ(\wedge W)
        \ar[u, "\cong" right]\\
        V 
        \ar[u, "\eta_V" right, "\cong" left]
        \ar[r, dashed, "\eta_W^{-1} \circ \rmQ(\varphi) \circ \eta_V"]
        & W
        \ar[u, "\eta_W" left, "\cong" right]
    \end{tikzcd}
\end{center}
Composing the isomorphisms along the bottom square yields the desired isomorphism \(V \cong W\).
\end{proof}

\begin{remark} Apparently, the inverse of \cref{cor:V cong W} is not true. Suppose $V=W$ has basis $\{a_2,b_3\}$, where $a_2$ and $b_3$ are elements of degree $2$ and $3$, respectively. Let $\partial_V:a_2\mapsto b_3$ and $\partial_V:b_3\mapsto 0$, and let $\partial_W=0$. Then $(\wedge V,\partial_V)$ and $(\wedge W,\partial_W)$ are not isomorphic, because $\rmH^2(\wedge V,\partial_V)=0\neq \bbQ\cong \rmH^2(\wedge V,\partial_W)$.
\end{remark}

\subsubsection{Lower bounds for \texorpdfstring{$\di^{\Ho(\CDGA)}$}{dicdga}}
\label{subsubsec:lower bounds}


We now prove \cref{thm:lower-bound} and \cref{cor:lower bound}.

\begin{proof}[Proof of \cref{thm:lower-bound}] 
Part (\ref{thm:interleaving-H}) follows from \cref{thm:Bubenik}. 
For Part (\ref{thm:interleaving-V,W}), we have
\begin{align*}
    \di^{\GVect}(\Vfunc,\Wfunc) 
    \, = \, & \di^{\GVect}(\rmQ\circ \wedge \Vfunc, \rmQ\circ \wedge \Wfunc) \quad \text{(by \cref{lemma:Q-wedge-natural-iso})}\\
    \, \leq \, & \di^{\Ho(\CDGA)}(\wedge\Vfunc, \wedge \Wfunc)  \quad \text{(by \cref{thm:Bubenik})}\\
    \, \leq \, & \di^{\CDGA}(\wedge\Vfunc, \wedge \Wfunc). \qedhere
\end{align*}

It is worth noting that the the inequalities in \cref{thm:lower-bound} do not always attain equality.
\cref{eq:stab 1}
can be strict when considering CDGAs with identical cohomology but differing Massey products. 
\cref{eq:stab 2} may be an equality in some special cases. 
For example, if the persistent CDGAs are constant, i.e., \(V_t = V\) and \(W_t = W\) for all \(t\), and \(\wedge V\) and \(\wedge W\) are quasi-isomorphic, then \cref{cor:V cong W} implies that \(V \cong W\), and the inequality becomes an equality. 
But, in general, \cref{eq:stab 2} can be strict, as demonstrated in \cref{ex:strict V v.s. wedge V}.

\begin{example}\label{ex:strict V v.s. wedge V}
    Consider two constant persistent spaces \(X_\bullet\) and \(Y_\bullet\) defined by \(X_t \simeq \mathbb{S}^2\) and \(Y_t \simeq K(\mathbb{Z},2) \times K(\mathbb{Z},3)\) for all \(t\). 
    By \cref{ex:s.m.m}, their Sullivan minimal models are \(\smm{\mathbb{S}^2} = (\wedge(a, b), \partial)\), with \(\deg a = 2\), \(\partial a = 0\), and \(\partial b = a^2\), and \(\smm{K(\mathbb{Z},2) \times K(\mathbb{Z},3)} = (\wedge(c, d), 0)\), with \(\deg c = 2\), \(\deg d = 3\).
    
    Define $V_t$ to be the graded vector space spanned by $a$ and $b$, and $W_t$ to be the graded vector space spanned by $c$ and $d$, for all \(t\). Then \(V_\bullet\) and \(W_\bullet\) are isomorphic as persistent graded vector spaces, so \(\di^{\GVect}(V_\bullet, W_\bullet) = 0\). 
    However, the CDGAs \((\wedge(a, b), \partial)\) and \((\wedge(c, d), 0)\) are not quasi-isomorphic, due to their differing differential structures. 
    Therefore, \(\di^{\Ho(\CDGA)}(\wedge V_\bullet, \wedge W_\bullet) = \infty\), showing that the inequality in \cref{eq:stab 2} can be strict.
\end{example}

These examples illustrate that while cohomology and linear parts provide meaningful lower bounds, they may fail to capture all the information present in persistent CDGA structures.
\end{proof}

\begin{proof}[Proof of \cref{cor:lower bound}] 
\cref{eq:di-cohomology} follows directly from \cref{thm:lower-bound} (\ref{thm:interleaving-H}). 

For \cref{eq:di-V-W}, assume that \(\smm{\VR_\bullet(X)} = (\wedge V_\bullet, \partial)\) and \(\smm{\VR_\bullet(Y)} = (\wedge W_\bullet, \partial)\), each equipped with quasi-isomorphisms
\[
\left\{\smm{\VR_t(X)} \xrightarrow{\mu_{\VR_t(X)}} \APL(\VR_t(X)) \right\}_{t >0}, \quad
\left\{\smm{\VR_t(Y)} \xrightarrow{\mu_{\VR_t(Y)}} \APL(\VR_t(Y)) \right\}_{t >0}.
\] 
By \cite[Lemma 3.1.7]{wilkins2011discrete} or \cite[Example 5.1]{memoli2024persistent}, since \(X\) and \(Y\) are simply-connected, each \(\VR_t(X)\) and \(\VR_t(Y)\) is simply-connected, and so are the Sullivan models \((\wedge V_t, \partial)\) and \((\wedge W_t, \partial)\) for all \(t > 0\).
Then applying \cref{thm:lower-bound} (\ref{thm:interleaving-V,W}), we obtain
\[
\di^{\GVect}(V_\bullet, W_\bullet) 
\leq 
\di^{\Ho(\CDGA)}\big(\smm{\VR_\bullet(X)}, \smm{\VR_\bullet(Y)}\big). \qedhere
\]

It follows from \cref{thm:rational homotopy v.s. V} that 
\[
\pi_*(\VR_\bullet(X)) \otimes \bbQ \cong \Hom_{\GVect}(V_\bullet, \bbQ)
\quad \text{and} \quad 
\pi_*(\VR_\bullet(Y)) \otimes \bbQ \cong \Hom_{\GVect}(W_\bullet, \bbQ).
\]
Therefore,
\begin{align*}
    \di^{\GVect}\big( \pi_*(\VR_\bullet(X)) \otimes \bbQ,\, \pi_*(\VR_\bullet(Y)) \otimes \bbQ \big)
    &= \di^{\GVect}\big( \Hom(V_\bullet, \bbQ),\, \Hom(W_\bullet, \bbQ) \big) \\
    &= \di^{\GVect}(V_\bullet, W_\bullet) \\
    &\leq \di^{\Ho(\CDGA)}\big( \smm{\VR_\bullet(X)},\, \smm{\VR_\bullet(Y)} \big).
\end{align*}
This completes the proof.
\end{proof}

\printbibliography



\end{document}